\newcommand{\ModR}{\hbox{{\rm Mod-}}R}
\newcommand{\Add}{\mathrm{Add}}
\newcommand{\C}{\mathcal{C}}
\newcommand{\A}{\mathcal{A}}
\newcommand{\B}{\mathcal{B}}
\DeclareMathOperator{\Hom}{Hom}
\DeclareMathOperator{\End}{End}
\DeclareMathOperator{\Ext}{Ext}
\DeclareMathOperator{\Ker}{Ker}
\DeclareMathOperator{\Img}{Im}
\DeclareMathOperator{\Filt}{Filt}
\DeclareMathOperator{\ident}{id}
\theoremstyle{plain}
\newtheorem{thm}{Theorem}[section]
\newtheorem{prop}[thm]{Proposition}
\newtheorem{lem}[thm]{Lemma}
\newtheorem{cor}[thm]{Corollary}
\theoremstyle{definition}
\newtheorem{defn}[thm]{Definition}
\newtheorem{exm}[thm]{Example}
\theoremstyle{remark}
\newtheorem*{rema}{Remark}
\begin{document}
\title{Enochs conjecture for cotorsion pairs and more}

\author{\textsc{Silvana Bazzoni}}
\address{Dipartimento di Matematica Tullio Levi-Civita, Universit\`a di Padova \\
Via Trieste 63, 35121 Padova, Italy}
\email{bazzoni@math.unipd.it}

\author{\textsc{Jan \v Saroch}}
\address{Department of Algebra, Faculty of Mathematics and Physics, Charles University \\ 
Sokolovsk\'{a} 83, 186 75 Praha~8, Czech Republic}
\email{saroch@karlin.mff.cuni.cz}

\keywords{Enochs conjecture, covering class, locally split, tree module}

\thanks{Research of the second-named author supported by GA\v CR 23-05148S}

\subjclass[2020]{16D70 (primary) 03E75, 16D10, 03E35 (secondary)}

\begin{abstract} Enochs Conjecture asserts that each covering class of modules (over any ring) has to be closed under direct limits. Although various special cases of the conjecture have been verified, the conjecture remains open in its full generality. In this paper, we prove the conjecture for the classes $\Filt(\mathcal S)$ where $\mathcal S$ consists of $\aleph_n$-presented modules for some fixed $n<\omega$. In particular, this applies to the left-hand class of any cotorsion pair generated by a~class of $\aleph_n$-presented modules.

Moreover, we also show that it is consistent with ZFC that Enochs Conjecture holds for all classes of the form $\Filt(\mathcal S)$ where $\mathcal S$ is a~set of modules. This leaves us with no explicit example of a~covering class where we cannot prove that Enochs Conjecture holds (possibly under some additional set-theoretic assumption).
\end{abstract} 

\maketitle
\vspace{4ex}

\section*{Introduction}
\label{sec:intro}
Approximation theory was developed as a~tool to approximate arbitrary modules by modules in certain classes where the classification is more manageable. Left and right approximations were studied in the case of modules over finite dimensional algebras by Auslander, Reiten, and Smal{\o} and independently by Enochs and Xu for modules over arbitrary rings using the terminology of preenvelopes and precovers. An important problem in approximation theory is when minimal approximations, that is covers or envelopes, over certain classes exist (see Section~\ref{sec:prelim} for definitions).

Bass in~\cite{Bass} studied the existence of minimal approximations for the class of projective modules and he showed that every module has a~projective cover if and only if the class of projective modules is closed under direct limits. 

A theorem of Enochs says that if a~class~$\C$ in $\ModR$ is closed under direct limits, then any module that has a~$\C$-precover has a~$\C$-cover, cf.\ \cite{Eno}. The converse problem, that is if a~covering class $\mathcal{C}$ is necessarily closed under direct limits, is still open and is known as Enochs Conjecture.

Some significant advancements have been made towards the solution of Enochs Conjecture in recent years. In 2017, Angeleri H\"ugel--\v{S}aroch--Trlifaj in \cite{AST17} proved the validity of Enochs Conjecture for the left hand class $\A$ of a~cotorsion pair $(\mathcal{A}, \mathcal{B})$ such that $\mathcal{B}$ is closed under direct limits. In particular, this holds for all tilting cotorsion pairs. The result in \cite{AST17} is based on set-theoretical methods developed by the second-named author in \cite{Sa0}.

Moreover, Bazzoni--Positselski--\v S\v tov\'i\v cek in \cite{BPS0} proved, using a~purely algebraic approach, that if a~cotorsion pair $(\A, \B)$ is given such that $\B$ is closed under direct limits and $\Add (M)$ is covering for a~module $M \in \A \cap \B$, then $\Add (M)$ is closed under direct limits. In particular, this applies to tilting cotorsion pairs.

In \cite{BL0}, Enochs Conjecture has been verified for the class $\mathcal P_1$ of modules of projective dimension at most 1 over a commutative semihereditary ring.

Even more recently, in \cite{Sa2}, the second-named author has proved the validity of Enochs Conjecture for the class $\Add (M)$ where $M$ decomposes into a~direct sum of modules at most $\aleph_n$-generated, for some fixed $n<\omega$, while, adding a~set-theoretic assumption (similar in flavour to the one used in Section~\ref{sec:cons}), the conjecture holds for $\Add (M)$ where $M$ is arbitrary.

In the present paper, we show (Theorem~\ref{t:pure-epi}, Corollary~\ref{c:Silvana}) that, for any infinite regular cardinal $\kappa$, every $<\!\kappa$-Kaplansky class of modules is closed under taking $\kappa$-pure-epimorphic images (hence, in particular, under $\kappa$-directed limits) provided that it is covering. In particular, this applies to the left-hand class of any cotorsion pair generated by $<\!\kappa$-presented modules (Corollary~\ref{c:Silvana-cotpairs}).

The next step is to use the powerful tool given by the construction of a~``tree module'' which goes back to \cite{ES} and which was already successfully applied in many situations like in~\cite{BS}, \cite{Sa1} and \cite{Sa2}. A suitable construction of a~``tree module'' and a~cardinal counting argument prove the main result of Section~2 (Corollary~\ref{c:aleph_n}), namely that every class of the form $\Filt(\mathcal S)$ where $\mathcal S$ is a~set of $<\!\aleph_n$-presented modules, for some $n<\omega$, satisfies Enochs Conjecture.
 
In Section~3 (Corollary~\ref{c:conEnochs}), we prove that, under a~suitable set-theoretic assumption, Enochs Conjecture holds for all the classes of the form $\Filt(\mathcal S)$ where $\mathcal S$ is a~set of modules.

\section{Preliminaries}
\label{sec:prelim}

Throughout this paper, $R$ denotes an associative unital ring and $\ModR$ the category of all (right $R$-)modules and homomorphisms between them. We note, however, that our results generalise in a straightforward way to the category of unitary modules over a ring with enough idempotents.

All classes $\mathcal A\subseteq\ModR$ are assumed to be closed under isomorphic images.

Unless otherwise stated, we work in the classical realm of ZFC, i.e.\ Zermelo--Fraenkel set theory with the axiom of choice. We also use the usual definitions of ordinal numbers, cardinal numbers, cofinality and stationary subsets (in particular, $\omega$ denotes the least infinite ordinal number). The reader unfamiliar with some of these notions is encouraged to consult the monograph \cite{J0} or \cite{EM0}.

Given an infinite (regular) cardinal $\kappa$, we say that a~partially ordered set $ (I , \leq)$ is \emph{$\kappa$-directed} provided that each subset $J\subseteq I$ with $|J| < \kappa$ has an upper bound in $ (I , \leq)$. A system $\mathcal M = (M_i, f_{ji}\colon M_i \to M_j \mid i\leq j\in I)$ of modules and homomorphisms between them indexed by a $\kappa$-directed poset $(I, \leq)$ is called \emph{$\kappa$-directed} as well.

Let $\kappa$ be an infinite (regular) cardinal and $(I,\leq)$ an upward directed partially ordered set. A directed system $\mathcal M = (M_i,f_{ji}\colon M_i\to M_j \mid i\leq j\in I)$ of modules and homomorphisms between them is called \emph{$\kappa$-continuous} provided that, for each \emph{totally ordered} $J\subseteq I$ of cardinality $<\kappa$, there exists $\sup J\in I$ and $M_{\sup J} = \varinjlim _{j\in J} M_j$. It is easy to show that, in this case, $\sup J$ also exists in $(I,\leq)$ for each \emph{directed subset} $J$ of $I$ with $|J|<\kappa$, and $M_{\sup J} = \varinjlim _{j\in J} M_j$ holds as well. In particular, $\mathcal M$ is $\kappa$-directed.

An epimorphism $\pi\colon B\to C$ is said to be \emph{$\kappa$-pure} if $\Hom_R(F,\pi)$ is surjective whenever $F$ is a $<\!\kappa$-presented module. Correlatively, a~monomorphism $\nu\colon A\to B$ is \emph{$\kappa$-pure} if the canonical projection $B\to B/\Img(\nu)$ is $\kappa$-pure. If $\kappa = \aleph_0$, it is often omitted. For a~$\kappa$-continuous system $\mathcal M$ as above (in fact, for any $\kappa$-directed system $\mathcal M$), the canonical short exact sequence $0\to K\to \bigoplus_{i\in I}M_i \to \varinjlim\mathcal M \to 0$ is $\kappa$-pure, i.e.\ comprises of $\kappa$-pure monomorphism and $\kappa$-pure epimorphism.

\smallskip

For a class $\C\subseteq \ModR$, we put \[\mathcal C^\perp =\{M \in \ModR \mid (\forall C\in\mathcal C)\,\Ext_R^1(C,M) = 0\}\] and \[{}^\perp \mathcal C = \{M\in\ModR \mid (\forall C\in\mathcal C)\,\Ext_R^1(M,C) = 0\}.\]

We recall the notion of a~precover, or sometimes called right approximation. If~$\C$ is any class of modules and $M\in \ModR$, a homomorphism $f\colon C \to M$ is called a~$\C$-\emph{precover} of $M$, if $C \in \C$ and $\Hom_R(C^\prime, f)$ is surjective for each $C^\prime \in\C$.

A $\C$-precover $\varphi\in \Hom_R(C, M)$ is called a~$\C$-\emph{cover} (or a minimal right approximation) of $M$ if for every endomorphism $f$ of $C$ such that $\varphi=\varphi f$, $f$ is an automorphism of $C$. So a~$\C$-cover is a~minimal version of a~$\C$-precover.

A $\C$-precover $\varphi$ of $M$ is said to be \emph{special} if $\varphi$ is an epimorphism and $\Ker \varphi\in \C^{\perp}$. The class $\C$ is called \emph{(special) precovering}  (\emph{covering}, resp.) if each $M \in \ModR$ admits a~(special) $\C$-precover (cover, resp.). Note that any covering class of modules is closed under direct summands and direct sums.

\smallskip

Recall that a \emph{filtration} of a module $M$ is a sequence $\mathfrak F = (M_\alpha\mid \alpha\leq \sigma)$ of modules indexed by an ordinal $\sigma$ such that $M_0 = 0$, $M_\sigma = M$, $M_\alpha\subseteq M_\beta$ for each $\alpha<\beta\leq\sigma$ and $M_\alpha = \bigcup_{\beta<\alpha} M_\beta$ for each limit ordinal $\alpha\leq\sigma$. Moreover, if the consecutive factors in $\mathfrak F$ are isomorphic to modules from a~given class $\mathcal A\subseteq\ModR$, we say that $\mathfrak F$ is an~\emph{$\mathcal A$-filtration} of $M$ and, correlatively, that $M$ is \emph{$\mathcal A$-filtered}.

Given a~class $\mathcal A\subseteq\ModR$, we denote by $\Filt(\mathcal A)$ the class of all modules possessing an~$\mathcal A$-filtration; otherwise put, the class of all $\mathcal A$-filtered modules. Furthermore, given a~cardinal $\kappa$, we denote by $\mathcal A^{<\kappa}$ the subclass of $\mathcal A$ consisting of all the $<\!\kappa$-presented modules in $\mathcal A$. It follows from \cite[Theorem~7.21]{GT0} that $\Filt(\mathcal A^{<\kappa})$ is a~precovering class of modules for any $\mathcal A$ and~$\kappa$.

Given an infinite regular cardinal $\kappa$, we say that a~class $\mathcal A$ of modules is \emph{$<\!\kappa$-Kaplansky} provided that for each $A\in\mathcal A$ and $X\subseteq A$ with $|X|<\kappa$, there exists a~$<\!\kappa$-presented module $B\in\mathcal A$ such that $X\subseteq B\subseteq A$ and $A/B\in\mathcal A$. If $\kappa = \lambda^+$, we also write $\lambda$-Kaplansky instead of $<\!\kappa$-Kaplansky. This agrees with \cite[Definition~10.1]{GT0}.

We say that a~pair $\mathfrak C = (\mathcal A,\mathcal B)$ of classes of modules is a~\emph{cotorsion pair} if $\mathcal A^\perp = \mathcal B$ and ${}^\perp\mathcal B = \mathcal A$. Moreover, for a~class of modules $\mathcal S$ such that $\mathcal B = \mathcal S^\perp$, we say that $\mathfrak C$ is \emph{generated by $\mathcal S$}. Recall that by \cite[Theorem~6.11]{GT0} (essentially, by the small object argument), the class $\mathcal A$ in a~cotorsion pair $(\mathcal A,\mathcal B)$ generated by a~set (or, equivalently, by a~skeletally small class) is special precovering, and $\mathcal A = \Filt(\mathcal A^{<\kappa})$ for a~suitable (regular) cardinal $\kappa$, cf.\ \cite[Theorem~7.13]{GT0}.

\section{The main theorem for $\Filt(\mathcal S)$ and cotorsion pairs}
\label{sec:main}

We start with a very useful result which allows us to capitalize on the covering assumption. It comes from \cite{BPS0}. Recall that a~monomorphism $m\colon B\to A$ is called \emph{locally split} provided that \[(\forall x\in B)(\exists h\in\Hom_R(A,B))\, h(m(x))=x.\]

\begin{prop} \label{p:localsplit} Let $\mathcal A\subseteq \ModR$ and $f\in\Hom_R(A, M)$ be a~surjective $\mathcal A$-precover with a~locally split kernel. If $M$ has an $\mathcal A$-cover, then $M\in\mathcal A$ and the epimorphism $f$ splits.
\end{prop}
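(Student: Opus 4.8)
The plan is to exploit the minimality of the $\mathcal A$-cover to force the kernel of the cover to vanish, from which both conclusions follow at once. Denote by $\varphi\colon C\to M$ the given $\mathcal A$-cover. Since $f$ is an $\mathcal A$-precover and $C\in\mathcal A$, the map $\varphi$ lifts along $f$, yielding $\alpha\colon C\to A$ with $f\alpha=\varphi$; dually, since $\varphi$ is an $\mathcal A$-precover and $A\in\mathcal A$, the map $f$ factors through $\varphi$, yielding $\beta\colon A\to C$ with $\varphi\beta=f$. Then $\varphi(\beta\alpha)=f\alpha=\varphi$, so by minimality of the cover the endomorphism $u:=\beta\alpha$ is an automorphism of $C$. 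I record the three relations $f\alpha=\varphi$, $\varphi\beta=f$, and $u=\beta\alpha\in\mathrm{Aut}(C)$.

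First I would observe that $\varphi$ is surjective: as $f$ is onto and $f=\varphi\beta$, the image of $\varphi$ is all of $M$. Hence, once I show $\Ker\varphi=0$, the cover $\varphi$ becomes an isomorphism, giving $M\cong C\in\mathcal A$, and $f$ splits via the section $\alpha\varphi^{-1}$ (indeed $f\alpha\varphi^{-1}=\varphi\varphi^{-1}=\mathrm{id}_M$). Thus the entire statement reduces to the single claim $\Ker\varphi=0$.

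The heart of the argument, and the step where the locally split hypothesis enters, is precisely this claim. I would argue by contradiction: suppose there is $0\neq c\in\Ker\varphi$. Then $f\alpha(c)=\varphi(c)=0$, so $\alpha(c)$ lies in the locally split kernel $K=\Ker f$. Applying local splitting to the element $\alpha(c)\in K$ produces $h\in\Hom_R(A,K)$ with $h(\alpha(c))=\alpha(c)$. I then build the endomorphism $g:=\mathrm{id}_C-\beta h\alpha u^{-1}$ of $C$. On one hand $\varphi g=\varphi-(\varphi\beta)h\alpha u^{-1}=\varphi-fh\alpha u^{-1}=\varphi$, because $h$ takes values in $\Ker f$, so $fh=0$; by minimality of the cover, $g$ must therefore be an automorphism. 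On the other hand, $g(u(c))=u(c)-\beta h\alpha(c)=u(c)-\beta\alpha(c)=u(c)-u(c)=0$ while $u(c)\neq 0$ (as $u$ is an automorphism and $c\neq 0$), so $g$ is not injective---a contradiction. Hence $\Ker\varphi=0$, which finishes the proof.

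I expect the construction of $g$ to be the main obstacle: one must combine the local splitting $h$ with the comparison maps $\alpha,\beta$ and the correcting automorphism $u^{-1}$ in exactly the right order, so that $g$ simultaneously fixes $\varphi$ (via $fh=0$) and annihilates the nonzero element $u(c)$. It is this interplay that converts the covering minimality into the rigidity forcing $\Ker\varphi=0$.
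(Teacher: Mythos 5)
Your proof is correct. The underlying mechanism is the same as the paper's --- use the locally split kernel to manufacture an endomorphism of the cover's domain that fixes the cover map yet annihilates a prescribed nonzero element, contradicting minimality --- but the bookkeeping is genuinely different. The paper first invokes the standard decomposition lemma (any precover of a module admitting a cover splits as $A=C\oplus D$ with $D\subseteq\Ker f$ and $f\restriction C$ a cover, \cite[Lemma~5.8]{GT0}); after that reduction the kernel of the restricted map is itself locally split and the contradicting endomorphism is simply $\ident_C-h$. You instead keep the cover $\varphi\colon C\to M$ abstract, produce Wakamatsu-style comparison maps $\alpha,\beta$ with $f\alpha=\varphi$ and $\varphi\beta=f$, note that $u=\beta\alpha$ is an automorphism, and transport the local splitting through these maps via $g=\ident_C-\beta h\alpha u^{-1}$. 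This makes your argument self-contained (no appeal to the decomposition lemma) at the cost of a slightly more elaborate endomorphism; one small point worth making explicit is that $\beta h\alpha(c)=\beta(\alpha(c))=u(c)$ uses only that $h$ fixes the single element $\alpha(c)\in\Ker f$, which is exactly what local splitness provides. Both routes yield the same conclusion: the cover is an isomorphism, $M\in\mathcal A$, and $\alpha\varphi^{-1}$ is a section of $f$.
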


\begin{proof} In \cite[Corollary~5.3]{BPS0}, a~general version for Ab5 categories is proved. For the reader's convenience, we include here a~short module-theoretic proof.

Since $M$ has an $\mathcal A$-cover, there is a~decomposition $A = C \oplus D$ such that $D\subseteq \Ker(f)$ and $f\restriction C\colon C\to M$ is an $\mathcal A$-cover (see, e.g., \cite[Lemma~5.8]{GT0}). Put $g = f\restriction C$ and $B = \Ker(g)$. Then $B \oplus D = \Ker(f)$, whence $g$ has a~locally split kernel, too.

Let $b\in B$ be arbitrary and let $h\in\Hom_R(C,B)$ be such that $h(b) = b$. Then $\ident_C-h\in\End_R(C)$ has $b$ in its kernel. On the other hand, the fact that $g$ is a~cover and $g = g\circ (\ident_C-h)$ implies that $\ident_C-h$ is an~automorphism. It follows that $b = 0$, and thus $B = 0$ as well. We conclude that $g$ is an~isomorphism, so $C\cong M\in\mathcal A$ and $f$ splits.
\end{proof}

Before stating our stepping-stone result of this section, recall that, for any infinite regular $\kappa$ and $\mathcal A\subseteq \ModR$, the class $\Filt(\mathcal A^{<\kappa})$ is $<\!\kappa$-Kaplansky by \cite[Theorem~10.3(a)]{GT0}\footnote{Replace $\kappa^+$ by $\kappa$, $\leq\kappa$ by $<\!\kappa$ and $\kappa$-Kaplansky by $<\!\kappa$-Kaplansky in its statement and proof.}. Similarly, by the Walker's lemma, the class $\Add(N)$ of all direct summands of direct sums of copies of $N$ is $\lambda$-Kaplansky whenever $N$ decomposes as a~direct sum of $\lambda$-presented modules.

\begin{thm} \label{t:pure-epi} Let $\kappa$ be an infinite regular cardinal and $\mathcal A\subseteq \ModR$ be a~$<\!\kappa$-Kaplansky class; e.g.\ let $\mathcal A = \Filt(\mathcal A^{<\kappa})$. Let $M$ be a~$\kappa$-pure-epimorphic image of a~module from $\mathcal A$. Then every special $\mathcal A$-precover $f\colon A\to M$ is a~$\kappa$-pure epimorphism with locally split kernel. Moreover, if $M$ possesses an $\mathcal A$-cover, then $M\in\mathcal A$ and $f$ splits.
\end{thm}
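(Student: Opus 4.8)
The plan is to treat the two assertions in turn, observing first that the final ``moreover'' is immediate once the rest is done: a~special $\mathcal A$-precover is in particular a~\emph{surjective} $\mathcal A$-precover, so as soon as we know that $K:=\Ker f$ is locally split in $A$, Proposition~\ref{p:localsplit} yields $M\in\mathcal A$ and the splitting of $f$. Thus the whole content lies in proving that $f$ is a~$\kappa$-pure epimorphism with locally split kernel. Note that $K\in\mathcal A^\perp$ by the definition of a~special precover.

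For the $\kappa$-purity I would use the precover property to transport purity from the witnessing epimorphism. Let $p\colon A'\to M$ be a~$\kappa$-pure epimorphism with $A'\in\mathcal A$, witnessing that $M$ is a~$\kappa$-pure-epimorphic image of a~module from $\mathcal A$. Since $f$ is an~$\mathcal A$-precover and $A'\in\mathcal A$, there is $g\colon A'\to A$ with $p=fg$. Now if $F$ is a~$<\!\kappa$-presented module and $u\colon F\to M$ is any homomorphism, then $u$ lifts along $p$ by $\kappa$-purity of $p$, hence along $f$ via $g$; so $\Hom_R(F,f)$ is surjective and $f$ is $\kappa$-pure.

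For the local splitting, fix $x\in K$; I must produce $h\in\Hom_R(A,K)$ with $h(x)=x$. Suppose for the moment that we have found a~$<\!\kappa$-presented module $B\in\mathcal A$ with $x\in B\subseteq A$, $A/B\in\mathcal A$, and such that $f(B)$ is $<\!\kappa$-presented. Pulling the $\kappa$-pure sequence $0\to K\to A\xrightarrow{f}M\to0$ back along the inclusion $f(B)\hookrightarrow M$ yields $0\to K\to B+K\xrightarrow{\bar f}f(B)\to0$, which is again $\kappa$-pure; as $f(B)$ is $<\!\kappa$-presented, it splits, say by a~section $\sigma\colon f(B)\to B+K$. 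Then $\rho:=\ident_{B+K}-\sigma\bar f\colon B+K\to K$ is a~retraction of the inclusion $K\hookrightarrow B+K$, so $h_0:=\rho\restriction B\colon B\to K$ satisfies $h_0(x)=x$ because $x\in K$. Finally $A/B\in\mathcal A$ and $K\in\mathcal A^\perp$ give $\Ext^1_R(A/B,K)=0$, whence $\Hom_R(A,K)\to\Hom_R(B,K)$ is surjective and $h_0$ extends to some $h\colon A\to K$ with $h(x)=x$. This proves that $K$ is locally split in $A$.

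The hard part is the construction of such a~$B$: we must arrange that $f(B)$ is $<\!\kappa$-presented while keeping $x\in B$ and $A/B\in\mathcal A$. I would obtain it by an~interleaved back-and-forth between $A$ and $A'$: starting from a~$<\!\kappa$-presented $B_0\ni x$ supplied by the $<\!\kappa$-Kaplansky property of $\mathcal A$ (with $A/B_0\in\mathcal A$), one alternately chooses a~$<\!\kappa$-presented $B'_n\subseteq A'$ with $p(B'_n)\supseteq f(B_n)$ and a~$<\!\kappa$-presented $B_{n+1}\subseteq A$ with $f(B_{n+1})\supseteq p(B'_n)$, invoking the Kaplansky property at each step to keep all the modules and their quotients in~$\mathcal A$. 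In the colimit $B=\bigcup_nB_n$, $B'=\bigcup_nB'_n$ one has $f(B)=p(B')=B'/(B'\cap K')$ with $K'=\Ker p$, and the $\kappa$-purity of $p$ is exploited to \emph{absorb the relations}, the aim being to force $B'\cap K'$ to be $<\!\kappa$-generated so that $p(B')$, and hence $f(B)$, is genuinely $<\!\kappa$-presented. Controlling this kernel is precisely the main obstacle; it is also where the case $\kappa=\aleph_0$ must be handled separately from uncountable $\kappa$ (for the latter a~countable chain of $<\!\kappa$-presented modules remains $<\!\kappa$-presented, whereas for $\kappa=\aleph_0$ one needs more careful, finite bookkeeping of generators and relations), and an~alternative is to replace the explicit chain by a~suitable elementary-submodel hull into which the presentation of $f(B)$ reflects.
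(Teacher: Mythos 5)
Your reduction of the ``moreover'' clause to Proposition~\ref{p:localsplit} and your proof of $\kappa$-purity (factoring the witnessing $\kappa$-pure epimorphism through the precover) are both correct and match the paper. The gap is in the local splitting. Your argument is conditional on producing a $<\!\kappa$-presented $B\in\mathcal A$ with $x\in B\subseteq A$, $A/B\in\mathcal A$ \emph{and} $f(B)$ $<\!\kappa$-presented, and you rightly identify that last condition as the obstacle: $f(B)\cong B/(B\cap K)$, and nothing forces $B\cap K$ to be $<\!\kappa$-generated. Your back-and-forth with $A'$ does not resolve this --- enlarging $B$ so that $f(B)$ matches the image of some $<\!\kappa$-presented $B'\subseteq A'$ still leaves you needing $B'\cap\Ker p$ to be $<\!\kappa$-generated, which is the same problem one level up; the sketch ends exactly where the difficulty begins, so the proof is incomplete as written.

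The missing idea is that you never need $f(B)$ at all. Since $x\in K=\Ker f$, the map $f$ factors as $g\pi$ through the projection $\pi\colon A\to A/xR$. Take $P$ $<\!\kappa$-presented with $x\in P\subseteq A$ and $A/P\in\mathcal A$ from the Kaplansky property; then $P/xR$ is \emph{automatically} $<\!\kappa$-presented (quotient of a $<\!\kappa$-presented module by a cyclic, hence $<\!\kappa$-generated, submodule), so the map $g\restriction (P/xR)\colon P/xR\to M$ lifts along the $\kappa$-pure $f$ to some $e\colon P/xR\to A$. The difference $j:=(\subseteq_P)-e\circ(\pi\restriction P)$ lands in $K$ and fixes $x$ (as $\pi(x)=0$), and then $\Ext^1_R(A/P,K)=0$ extends $j$ to all of $A$, exactly as in your final step. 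This sidesteps any control of $f(P)$ and makes the separate treatment of $\kappa=\aleph_0$ and the elementary-submodel alternative unnecessary.
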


\begin{proof} Let $f^\prime\colon A^\prime\to M$ be a $\kappa$-pure epimorphism with $A^\prime\in\mathcal A$. Since $f^\prime$ factorizes through any $\mathcal A$-precover of $M$, every (special) $\mathcal A$-precover of $M$ has to be a~$\kappa$-pure epimorphism as well. Let us fix an arbitrary special $\mathcal A$-precover $f\colon A\to M$. We have a~short exact sequence \[0\longrightarrow B \overset{\subseteq}{\longrightarrow} A \overset{f}\longrightarrow M \longrightarrow 0\eqno{(\triangle)}\] with $B\in\mathcal A^\perp$.

Let $x\in B$ be arbitrary. Since $\mathcal A$ is $<\!\kappa$-Kaplansky, we get a~$<\!\kappa$-presented submodule $P$ of $A$ with $x\in P$ and $A/P\in\mathcal A$. Then $P/xR$ is $<\!\kappa$-presented as well.

Since $xR\subseteq B$, the epimorphism $f$ factorizes as $g\pi$ where $\pi\colon A\to A/xR$ is the canonical projection and $g\colon A/xR \to M$ is an epimorphism. Put $h = g\restriction P/xR$. We have the following commutative diagram with exact rows

\[\xymatrix{0 \ar[r] & xR \ar[r]^-{\subseteq} \ar[d]_-{i} & P \ar[r]^-{\pi\restriction P} \ar@{-->}[dl]_-{j} \ar[d]^-{\subseteq} & P/xR \ar[r] \ar[d]^{h} \ar@{-->}[dl]^-{e} & 0 \\
0 \ar[r] & B \ar[r]_-{\subseteq} & A \ar[r]_-{f} & M \ar[r] & 0
}\]
where $i$ denotes the inclusion, and there exists $e\in\Hom_R(P/xR,A)$ such that $fe = h$ since $P/xR$ is $<\!\kappa$-presented and $f$ is a~$\kappa$-pure epimorphism. Consequently, there is a~homomorphism $j\colon P\to B$ extending $i$. Now we apply the functor $\Hom_R(-, B)$ to the short exact sequence $0\to P \overset{\subseteq}{\longrightarrow} A \longrightarrow A/P\longrightarrow 0$ and use that $B\in \B$ and $A/P\in \A$ to obtain that $j$ can be further extended to $k\in\Hom_R(A,B)$. So $k(x) = i(x) = x$, and we have proved that the kernel of $f$ is locally split as $x\in B$ was arbitrary.

The moreover clause now follows directly from Proposition~\ref{p:localsplit} and from the fact that there exists a~special $\mathcal A$-precover of $M$; since, e.g., any (surjective) $\mathcal A$-cover is special by \cite[Lemma~5.13]{GT0}.

\end{proof}

Note that, instead of the cyclic submodule $xR$ of $B$, we could have considered any $<\!\kappa$-generated submodule $S$ of $B$ in the proof without changing the reasoning. Thus we obtain an~even stronger version of local splitness where $x$ is replaced by an~$X\subseteq B$ of cardinality $<\!\kappa$.

\begin{cor} \label{c:Silvana} Let $\kappa$ be an infinite regular cardinal and $\mathcal A\subseteq \ModR$ be a~covering class of modules which is also $<\!\kappa$-Kaplansky. Then $\mathcal A$ is closed under taking $\kappa$-pure-epimorphic images. In particular, if $\mathcal M$ is a~$\kappa$-directed system of modules from~$\mathcal A$, then $\varinjlim \mathcal M\in\mathcal A$.
\end{cor}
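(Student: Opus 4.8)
The plan is to read off both assertions from Theorem~\ref{t:pure-epi}, whose statement already packages the covering hypothesis into the existence of a well-behaved precover. For the first assertion, I would take an arbitrary $\kappa$-pure-epimorphic image $M$ of some module in $\mathcal A$; this is precisely the standing hypothesis of Theorem~\ref{t:pure-epi}. Since $\mathcal A$ is covering, $M$ admits an $\mathcal A$-cover, so the \emph{moreover} clause of that theorem applies verbatim and yields $M\in\mathcal A$. No further work is needed for this step: the delicate local-splitness argument has already been carried out inside the proof of the theorem, and the covering assumption enters only to provide the $\mathcal A$-cover that triggers the moreover clause.

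For the ``in particular'' statement, the idea is to realise $\varinjlim\mathcal M$ as a $\kappa$-pure-epimorphic image of a module in $\mathcal A$ and then invoke the first part. Given a $\kappa$-directed system $\mathcal M = (M_i, f_{ji})$ with all $M_i\in\mathcal A$, I would use the canonical short exact sequence $0\to K\to\bigoplus_{i\in I}M_i\to\varinjlim\mathcal M\to 0$ recorded in Section~\ref{sec:prelim}, whose right-hand map is a $\kappa$-pure epimorphism for any $\kappa$-directed system. The key observation is that $\bigoplus_{i\in I}M_i$ again lies in $\mathcal A$, because a covering class is closed under direct sums. Hence $\varinjlim\mathcal M$ is a $\kappa$-pure-epimorphic image of a module in $\mathcal A$, and the first part gives $\varinjlim\mathcal M\in\mathcal A$.

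I expect no serious obstacle here, as the substantive work lives in Theorem~\ref{t:pure-epi}; the corollary is a formal consequence. The two facts that must be lined up are both furnished by the covering hypothesis together with the preliminaries: covering guarantees the $\mathcal A$-cover needed to fire the moreover clause in the first step, and simultaneously guarantees closure under direct sums needed to place $\bigoplus_{i\in I}M_i$ in $\mathcal A$ in the second step. The only mild subtlety worth a remark is that the relevant $\mathcal A$-cover is automatically surjective: since $M$ is already an epimorphic image of a module in $\mathcal A$, the precover property forces any cover to factor that surjection and hence to be onto, so that the cover qualifies as a special $\mathcal A$-precover. This is exactly the point already exploited in the proof of Theorem~\ref{t:pure-epi}, so I would simply appeal to it rather than repeat the argument.
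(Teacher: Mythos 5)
Your proposal is correct and follows exactly the intended derivation: the paper states Corollary~\ref{c:Silvana} without a separate proof precisely because it is read off from the \emph{moreover} clause of Theorem~\ref{t:pure-epi}, using that a covering class is closed under direct sums and that the canonical presentation of a $\kappa$-directed limit is a $\kappa$-pure epimorphism. Your closing remark about the cover being surjective, hence special, is the same point the paper makes at the end of the proof of Theorem~\ref{t:pure-epi} via \cite[Lemma~5.13]{GT0}.
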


Let us also state explicitly the result for cotorsion pairs.

\begin{cor} \label{c:Silvana-cotpairs} Let $\kappa$ be an infinite regular cardinal and $(\mathcal A,\mathcal B)$ be a~cotorsion pair generated by a~class consisting of $<\!\kappa$-presented modules. If $\mathcal A$ is covering, then $\mathcal A$ is closed under taking $\kappa$-pure-epimorphic images.

In particular, the left-hand class of a~cotorsion pair generated by a~class of finitely presented modules is covering if and only if it is closed under pure-epimorphic images, if and only if it is closed under direct limits.
\end{cor}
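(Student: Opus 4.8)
The plan is to reduce both assertions to Corollary~\ref{c:Silvana}, so the first task is to see that the left-hand class $\mathcal A$ of a cotorsion pair generated by $<\!\kappa$-presented modules is $<\!\kappa$-Kaplansky. I would enlarge the generating class $\mathcal S$ to $\mathcal S\cup\{R\}$ (this changes neither $\mathcal B=\mathcal S^\perp$, since $R$ is projective, nor $\mathcal A$), observe that $\mathcal S\subseteq{}^\perp(\mathcal S^\perp)=\mathcal A$, and note that $R$, being finitely presented, is $<\!\kappa$-presented for every infinite $\kappa$; thus $\mathcal A$ is generated by a class of $<\!\kappa$-presented modules contained in $\mathcal A$. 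For $\kappa$ uncountable this already forces $\mathcal A=\Filt(\mathcal A^{<\kappa})$ by the deconstructibility of left-hand classes (\cite[Theorem~7.13]{GT0}), whence $\mathcal A$ is $<\!\kappa$-Kaplansky by the recalled form of \cite[Theorem~10.3(a)]{GT0}, and Corollary~\ref{c:Silvana} applies. The case $\kappa=\aleph_0$ is more delicate and in fact coincides with the hard half of the ``in particular'' clause, treated below.

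For the ``in particular'' clause take $\kappa=\aleph_0$, so that $\kappa$-pure epimorphisms are pure epimorphisms and $\kappa$-directed limits are direct limits, and I would prove the three properties equivalent cyclically. That closure under pure-epimorphic images implies closure under direct limits is immediate from the preliminaries: for a directed system $\mathcal M$ in $\mathcal A$ the canonical sequence $0\to K\to\bigoplus_i M_i\to\varinjlim\mathcal M\to 0$ is pure and $\bigoplus_i M_i\in\mathcal A$ (left-hand classes are closed under coproducts), so $\varinjlim\mathcal M$ is a pure-epimorphic image of an $\mathcal A$-module. That closure under direct limits implies covering is Enochs's theorem, $\mathcal A$ being (special) precovering as it is generated by a set.

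The substance is the implication covering $\Rightarrow$ closure under pure-epimorphic images. Given a pure epimorphism $g\colon N\to M$ with $N\in\mathcal A$, I would fix a special $\mathcal A$-precover $f\colon A\to M$; as $g$ factors through $f$, the map $f$ is itself a pure epimorphism, so its kernel $B$ is a pure submodule of $A$ lying in $\mathcal A^\perp=\mathcal B$. By Proposition~\ref{p:localsplit} it then suffices to prove that $B$ is locally split in $A$: the $\mathcal A$-cover guaranteed by the covering hypothesis will split $f$ and place $M\cong A/B$ in $\mathcal A$. Thus everything comes down to the local splitness of a pure submodule $B\in\mathcal B$ of a module $A\in\mathcal A$.

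This is the step I expect to be the main obstacle. The tempting shortcut --- invoking the first assertion with $\kappa=\aleph_0$, i.e.\ running the local-splitting argument of Theorem~\ref{t:pure-epi} at the finitely presented level --- is not available, because it would require $\mathcal A$ to be $<\!\aleph_0$-Kaplansky, and FP-generated left-hand classes need not be $\aleph_0$-deconstructible; already for $\mathcal A$ the class of all projective modules, $\mathcal A=\Filt(\mathcal A^{<\aleph_0})$ fails as soon as some projective is not a direct sum of finitely presented modules. I would therefore attempt to establish the local splitness directly from two ingredients: first, for each $x\in B$ and each \emph{finitely presented} submodule $A'\subseteq A$ with $x\in A'$, the purity of $B$ in $A$ yields, via the equational criterion, a homomorphism $A'\to B$ fixing $x$ (the finite system expressing $x$ through generators and relations of $A'$ is solvable in $A$ by the generators themselves, hence in $B$); second, writing $A$ as $(\mathcal S\cup\{R\})$-filtered and using $\Ext^1_R(\mathcal S\cup\{R\},B)=0$, any partial homomorphism into $B$ extends along the filtration. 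Reconciling the finitely presented scale on which purity operates with the filtration scale on which the $\Ext$-vanishing operates is the crux, and it is precisely where the covering hypothesis --- entering through Proposition~\ref{p:localsplit} --- must do the work that deconstructibility cannot.
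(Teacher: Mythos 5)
Your treatment of the uncountable case is exactly the paper's: deconstructibility from \cite[Theorem~7.13]{GT0} gives $\mathcal A = \Filt(\mathcal A^{<\kappa})$, hence the $<\!\kappa$-Kaplansky property, and Corollary~\ref{c:Silvana} applies. The reductions in the ``in particular'' clause (closure under pure-epimorphic images implies closure under $\varinjlim$ via the canonical pure presentation of a direct limit, and closure under $\varinjlim$ implies covering via Enochs' theorem and precovering) are also fine.

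However, at the point you yourself flag as the crux --- establishing local splitness of the kernel $B$ of a special $\mathcal A$-precover when $\kappa=\aleph_0$ --- your proposal stops at a description of the difficulty rather than an argument. Your two ingredients do not combine: the equational criterion produces a map $A'\to B$ fixing $x$ only for a finitely presented $A'$, but extending such a map to all of $A$ requires $\Ext^1_R(A/A',B)=0$, i.e.\ $A/A'\in{}^\perp\mathcal B$, and a generic finitely presented submodule has no reason to satisfy this; conversely, an $(\mathcal S\cup\{R\})$-filtration of $A$ lets you extend homomorphisms along the filtration but its layers need not be finitely presented, so it gives no purchase for producing the initial map fixing $x$. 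Nor can the covering hypothesis ``do the work that deconstructibility cannot'': it only enters \emph{after} local splitness is established, via Proposition~\ref{p:localsplit}. The paper closes this gap with a stabilization trick you are missing: by \cite[Corollary~6.13]{GT0} there is a module $C\in\mathcal A\cap\mathcal B$ such that $A\oplus C$ \emph{is} $\mathcal A^{<\aleph_0}$-filtered; one then runs the argument of Theorem~\ref{t:pure-epi} on the modified sequence $0\to B\oplus C\to A\oplus C\to M\to 0$ (noting that $B\oplus C\in\mathcal B$ and that $\Filt(\mathcal A^{<\aleph_0})$ is $<\!\aleph_0$-Kaplansky), which yields local splitness of $B\oplus C$ in $A\oplus C$ and hence of $B$ in $A$.
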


\begin{proof} If $\kappa$ is uncountable, it follows from \cite[Theorem~7.13]{GT0} that $\mathcal A = \Filt(\mathcal A^{<\kappa})$. For $\kappa = \aleph_0$, $A\in\mathcal A$ need not be $\mathcal A^{<\kappa}$-filtered. However, we can use \cite[Corollary~6.13]{GT0} to find a~module $C\in\mathcal A\cap\mathcal B$ such that $A\oplus C$ is $\mathcal A^{<\kappa}$-filtered. Consequently, we replace the short exact sequence $(\triangle)$ in the proof of Theorem~\ref{t:pure-epi} by
\[0\longrightarrow B\oplus C \overset{\subseteq}{\longrightarrow} A \oplus C \overset{f\oplus 0}\longrightarrow M \longrightarrow 0\] and follow the proof to show that the inclusion in this modified short exact sequence is locally split. It immediately follows that the inclusion in $(\triangle)$ is locally split as well.
\end{proof}

In what follows, we prove that, if the conclusion of Corollary~\ref{c:Silvana} holds for some uncountable $\kappa<\aleph_\omega$, then $\mathcal A$ is closed under \emph{arbitrary} direct limits assuming that $\mathcal A = \Filt(\mathcal A^{<\kappa})$. This is the case, e.g., if $\mathcal A$ is the left-hand class of a~cotorsion pair generated by $<\!\kappa$-presented modules. Thus we verify the Enochs Conjecture for such classes $\mathcal A$.

To achieve our goal, we show that, for any infinite regular cardinal $\lambda$, the class $\mathcal A$ in question is closed under taking direct limits of $\lambda$-continuous directed systems of modules provided that this closure property holds for $\lambda^+$-continuous directed systems. We adapt the construction from~\cite[Lemma~A.1]{Sa2}, originally going back to \cite{ES}. First, we need to fix some notation.

\begin{defn} \label{d:treemod} Let $\kappa, \lambda$ be infinite cardinals, $\kappa$ regular. Let a~set $T\subseteq {}^\kappa \lambda$ be given where ${}^\kappa \lambda$ denotes the set of all mappings from $\kappa$ into $\lambda$. Furthermore, let $N$ be a~module and $\mathfrak K = (K_\alpha\mid \alpha\leq\kappa)$ be a~filtration of a~submodule $K\subseteq N$. 
For each $\eta\in T$, we denote by $\nu_\eta\colon N\to N^{(T)}$ the canonical embedding from $N$ onto the $\eta$th direct summand of $N^{(T)}$.
Finally, let us denote by $G$ the submodule of $N^{(T)}$ generated by the set \[J = \{\nu_\eta(x)-\nu_\zeta(x)\mid \eta,\zeta\in T, (\exists \alpha<\kappa)(x\in K_\alpha\;\&\; \eta\restriction\alpha = \zeta\restriction\alpha)\}.\]

Then $N^{(T)}/G$ is called the \emph{tree module built from $N, \mathfrak K$ and $T$}. Moreover, for each $x\in K$, we denote by $\rho(x)$ the \emph{rank of $x$}, i.e.\ the least $\alpha<\kappa$ such that $x\in K_\alpha$. Note that $\rho(x)$ has to be a~successor ordinal unless $x = 0$.
\end{defn}

Notice that $G$ is actually the subgroup of $N^{(T)}$ generated by $J$: indeed, for $r\in R$, $(\nu_\eta(x)-\nu_\zeta(x))r = \nu_\eta(xr)-\nu_\zeta(xr)\in J$ whenever $\nu_\eta(x)-\nu_\zeta(x)\in J$ since $\rho(xr)\leq \rho(x)$. We will occasionally need to work with the elements in $G$. The following lemma will come in handy.

\begin{lem} \label{l:aux} In the setting of Definition~\ref{d:treemod}, let \[x = \sum_{i=1}^n (\nu_{\eta_i}(x_i)-\nu_{\zeta_i}(x_i)) \eqno{(+)}\] where the summands on the right-hand side are nonzero elements from $J$.

Assume that we have used the least amount of pairwise distinct elements from $T$ possible in the expression~$(+)$. Then, for each $\eta\in T$ occuring as a~lower index in $(+)$, the $\eta$th coordinate of $x$ in $N^{(T)}$ is nonzero.
\end{lem}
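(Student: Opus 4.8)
The plan is to argue by contradiction, exploiting the minimality of the number of indices together with the ultrametric structure of ${}^\kappa\lambda$. First I would reformulate the expression $(+)$ as a labelled multigraph: regard each element $\eta\in T$ occurring in $(+)$ as a vertex of a set $S$, and each summand $\nu_{\eta_i}(x_i)-\nu_{\zeta_i}(x_i)$ as an edge between $\eta_i$ and $\zeta_i$ carrying the label $x_i\in K$. Since each summand is a nonzero element of $J$, we have $\eta_i\ne\zeta_i$ and, by the defining condition of $J$ together with the definition of rank, $\eta_i\restriction\rho(x_i)=\zeta_i\restriction\rho(x_i)$. Under the canonical projection onto the $\eta$th summand of $N^{(T)}$, the $\eta$th coordinate of $x$ becomes the signed sum $\sum_{i:\,\eta_i=\eta}x_i-\sum_{i:\,\zeta_i=\eta}x_i$ of the labels of the edges incident to $\eta$. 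So the claim amounts to: if $|S|$ is minimal, then no such incident-label sum vanishes.

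Suppose, towards a contradiction, that some vertex $\eta_0\in S$ has vanishing $\eta_0$th coordinate. After re-orienting the edges incident to $\eta_0$ (and absorbing signs into their labels, which does not change ranks since $\rho(-y)=\rho(y)$), I would write them as $\nu_{\eta_0}(y_j)-\nu_{w_j}(y_j)$ for $j=1,\dots,m$, with $w_j\ne\eta_0$, so that the vanishing hypothesis reads $\sum_{j=1}^m y_j=0$. The key step is then to redirect all these edges to a single, carefully chosen neighbour: let $j^\ast$ be an index with $\rho(y_{j^\ast})$ maximal among $\rho(y_1),\dots,\rho(y_m)$ and put $w=w_{j^\ast}$.

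The heart of the matter is the ultrametric (strong triangle) inequality for the function $\Delta(\eta,\zeta)=\min\{\beta<\kappa\mid \eta(\beta)\ne\zeta(\beta)\}$, for which an edge $\nu_\eta(y)-\nu_\zeta(y)$ lies in $J$ exactly when $\Delta(\eta,\zeta)\ge\rho(y)$. From the edge at $j^\ast$ we get $\Delta(\eta_0,w)\ge\rho(y_{j^\ast})\ge\rho(y_j)$ for every $j$, so each $\nu_{\eta_0}(y_j)-\nu_w(y_j)$ lies in $J\cup\{0\}$; and $\Delta(w,w_j)\ge\min(\Delta(w,\eta_0),\Delta(\eta_0,w_j))\ge\rho(y_j)$, so each $\nu_w(y_j)-\nu_{w_j}(y_j)$ lies in $J\cup\{0\}$ as well. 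Writing $\nu_{\eta_0}(y_j)-\nu_{w_j}(y_j)=\bigl(\nu_{\eta_0}(y_j)-\nu_w(y_j)\bigr)+\bigl(\nu_w(y_j)-\nu_{w_j}(y_j)\bigr)$ and summing over $j$, the first group collapses to $\nu_{\eta_0}(\sum_j y_j)-\nu_w(\sum_j y_j)=0$ by the vanishing hypothesis. Hence the edges incident to $\eta_0$ can be replaced by the elements $\nu_w(y_j)-\nu_{w_j}(y_j)\in J\cup\{0\}$, none of which involves $\eta_0$; discarding the zero ones and leaving the edges not incident to $\eta_0$ untouched yields an expression of $x$ as a sum of nonzero elements of $J$ using only indices from $S\setminus\{\eta_0\}$. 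This contradicts the minimality of $|S|$ and proves the lemma.

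The main obstacle is the correct choice of neighbour to merge into: it must be the endpoint of an incident edge of \emph{maximal} rank, since only then does the ultrametric inequality guarantee that \emph{every} redirected edge, in both of its two new pieces, stays inside $J$. Once this choice is made, the telescoping cancellation is forced by the zero-coordinate hypothesis, and what remains (re-orientation of edges and the handling of the degenerate zero terms) is routine bookkeeping.
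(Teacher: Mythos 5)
Your proof is correct and follows essentially the same route as the paper's: both pick an incident generator of maximal rank $\rho$ and redirect every occurrence of the offending index $\eta_0$ to its other endpoint $w$, using the vanishing of the $\eta_0$th coordinate to see that the sum is unchanged and the maximality of the rank to see that the modified summands stay in $J$. The graph/ultrametric packaging is just a different presentation of the same argument.
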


\begin{proof} Suppose that there is an $\eta\in T$ occuring in $(+)$ for which the $\eta$th coordinate of $x$ in $N^{(T)}$ is zero. Let us denote by $I\subseteq\{1,\dots,n\}$ the set of all indices such that either $\eta_i = \eta$, or $\zeta_i = \eta$; possibly replacing $x_i$ by $-x_i$, we can assume w.l.o.g.\ that it is always the case that $\eta_i = \eta$. Pick a~$k\in I$ with the maximal $\rho(x_k)$. Then $\eta\restriction\rho(x_k) = \zeta_k\restriction\rho(x_k)$, and so we can replace all the occurences of $\eta$ in $(+)$ with~$\zeta_k$. The summands then remain valid generators from $J$ (but notice that the $k$th one becomes zero). Moreover, the resulting sum remains the same, i.e.\ it equals~$x$. This is a~contradiction with our assumption on the minimality of $(+)$.
\end{proof}

The next lemma sums up some properties of tree modules that will be important in the sequel.

\begin{lem} \label{l:tree} Let $\kappa$ be an infinite regular cardinal, $N\in\ModR$ and $\mathcal A$ a~class of modules. Assume that $K$ is a~submodule of $N$ with a~filtration $\mathfrak K = (K_\alpha\mid \alpha\leq\kappa)$ such that $N/K_\alpha\in\mathcal A$ holds for each $\alpha<\kappa$. Put $C = N/K$. For each cardinal $\xi$, there exists a~$\lambda\geq\xi$ and a~short exact sequence $0\to D\to L\overset{g}{\to} C^{(2^\lambda)}\to 0$ such that
\begin{enumerate}
	\item $L$ is the direct limit of a~$\kappa^+$-continuous directed system consisting of modules from $\Filt(\mathcal A)$, and
	\item $|D|\leq\lambda$.
\end{enumerate}
\end{lem}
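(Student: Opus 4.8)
The plan is to construct the tree module explicitly from $N$, $\mathfrak K$, and a suitable tree $T \subseteq {}^\kappa\lambda$, and then read off the short exact sequence from its structure. First I would fix the cardinal $\lambda \geq \xi$: since we want the quotient to be a copy of $C^{(2^\lambda)}$, and since the natural "ends" of branches in a tree of height $\kappa$ correspond to cofinal branches, I would choose $\lambda$ large enough (at least $\xi$, and regular if convenient) so that $T$ can be taken to be the full tree ${}^\kappa\lambda$ of all functions $\kappa \to \lambda$; the number of branches of length $\kappa$ is then $\lambda^\kappa = 2^\lambda$ after arranging $\lambda^\kappa = 2^\lambda$ (achievable by taking $\lambda = 2^{\xi'}$ for a large enough $\xi'$, so that $\lambda^\kappa = (2^{\xi'})^\kappa = 2^{\xi'\cdot\kappa} = 2^{\max(\xi',\kappa)} = \lambda$ when $\xi' \geq \kappa$, hence $\lambda^{<\kappa}=\lambda$ while the count of full branches gives $2^\lambda$). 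The precise cardinal bookkeeping here is the first thing to get right.

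Next I would set $L = N^{(T)}/G$, the tree module, and describe the map $g \colon L \to C^{(2^\lambda)}$. The idea is that the composite $N^{(T)} \to N^{(T)}/G$ should, after quotienting each copy of $N$ by $K$, collapse the glued structure so that what survives is indexed by the full branches. Concretely, I would define $g$ on each summand $\nu_\eta(N)$ via the projection $N \to N/K = C$, landing in the $\bar\eta$-th coordinate of $C^{(2^\lambda)}$ where $\bar\eta$ is the branch determined by $\eta$; one checks that the generators of $G$ (differences $\nu_\eta(x) - \nu_\zeta(x)$ with $x \in K_\alpha$ and $\eta\restriction\alpha = \zeta\restriction\alpha$) are killed, since $x \in K$ maps to $0$ in $C$. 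Surjectivity is clear, and I would set $D = \Ker g$. The bound $|D| \leq \lambda$ in part~(2) should follow because $D$ is generated, modulo $G$, by elements coming from $K$ (the kernel of each $N \to C$ is $K$, of size $\leq |K| \leq |N|$, but after the tree gluing the distinct surviving elements are controlled by the finite supports and the filtration $\mathfrak K$ of length $\kappa$); here Lemma~\ref{l:aux} is the essential tool, letting me put elements of $G$ in a canonical minimal form and thereby count the effective size of $D$.

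For part~(1), that $L$ is a $\kappa^+$-continuous direct limit of modules from $\Filt(\mathcal A)$, the natural approach is to filter the index tree by initial segments. For a subset $S \subseteq T$, let $L_S = N^{(S)}/(G \cap N^{(S)})$ be the corresponding sub-tree-module; as $S$ ranges over subsets of size $\leq \kappa$ directed by inclusion, the $L_S$ form a $\kappa^+$-continuous system with direct limit $L$. The content is that each such $L_S$, or rather each successive factor, lies in $\Filt(\mathcal A)$: here the hypothesis $N/K_\alpha \in \mathcal A$ for all $\alpha < \kappa$ is what feeds the filtration, since adding one branch at a time builds $L_S$ by extensions whose factors are isomorphic to the quotients $N/K_\alpha$. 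I would verify the $\Filt(\mathcal A)$ claim by exhibiting, on each $L_S$, a filtration indexed along a well-ordering of $S$ compatible with the tree ordering, using the standard Eklof-type argument.

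The main obstacle I expect is the cardinality bound $|D| \leq \lambda$ in part~(2) together with the precise cardinal arithmetic fixing $\lambda$. Getting $L$ to surject onto exactly $C^{(2^\lambda)}$ (rather than $C^{(\lambda^\kappa)}$ with $\lambda^\kappa$ possibly larger) requires care in how many full branches the tree $T$ has versus how large $\lambda$ is, and the kernel $D$ must be kept down to size $\lambda$ even though $N^{(T)}$ is enormous. This is exactly where the tree construction pays off: the gluing by $G$ forces the kernel to depend only on the "trunk" data of length $\kappa$ rather than on the $2^\lambda$ branches, so the heart of the proof is a careful application of Lemma~\ref{l:aux} to show that every element of $D$ is represented using boundedly many coordinates drawn from a set of size $\lambda$.
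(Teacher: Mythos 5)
Your overall architecture matches the paper's: take $T={}^\kappa\lambda$, form the tree module $L=N^{(T)}/G$, prove (1) by filtering $L$ through the submodules $L_S$ for $S\subseteq T$ of size at most $\kappa$ (adding one branch at a time, with consecutive factors isomorphic to quotients $N/K_\gamma$), and obtain the surjection onto a direct sum of copies of $C$ from the coproduct of the projections $N\to C$. However, there are two concrete gaps. First, your cardinal arithmetic is self-defeating: with $\lambda=2^{\xi'}$ and $\xi'\geq\kappa$ you compute $\lambda^\kappa=2^{\xi'\cdot\kappa}=2^{\xi'}=\lambda$, which is \emph{strictly less} than $2^\lambda$; so $T={}^\kappa\lambda$ has only $\lambda$ branches and your quotient is $C^{(\lambda)}$, not $C^{(2^\lambda)}$. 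A sequence $0\to D\to L\to C^{(\lambda)}\to 0$ with $|D|\leq\lambda$ is useless for the Hunter-type counting in Theorem~\ref{t:push-down}, where one needs strictly more copies of $C$ than there are homomorphisms out of $D$. To get both $\lambda^{<\kappa}=\lambda$ and $\lambda^\kappa=2^\lambda$ one must take $\lambda$ of cofinality $\kappa$, e.g.\ a strong limit cardinal of cofinality $\kappa$ above $|K|+\xi$ (such as a suitable $\beth_\delta$ with $\delta$ a limit ordinal of cofinality $\kappa$); the paper simply postulates $|K|+\xi\leq\lambda=\lambda^{<\kappa}$ and $\lambda^\kappa=2^\lambda$.

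Second, you do not actually give the mechanism behind $|D|\leq\lambda$, and the tool you name (Lemma~\ref{l:aux}, put elements of $G$ in minimal form and count) is not what does the work there — that lemma is what is needed for part (1), to identify $(N^{(S_\alpha)}+G)\cap N^{\{\eta_\alpha\}}$ with a $K_\gamma$. The kernel of the coproduct map $N^{(T)}\to C^{(T)}$ is $K^{(T)}$, which has cardinality roughly $|K|\cdot 2^\lambda$; the point is that it equals $F+G$ for a \emph{fixed} submodule $F$ of size at most $\lambda$. The paper's device: for each node $u\in{}^{<\kappa}\lambda$ choose one branch $\eta_u\in T$ extending $u$ and put $F=\sum_{u}\nu_{\eta_u}(K)$; then $|F|\leq\lambda^{<\kappa}\cdot|K|=\lambda$, and for every $\eta\in T$ and $x\in K$ one has $\nu_\eta(x)-\nu_{\eta_u}(x)\in G$ for $u=\eta\restriction\rho(x)$, since $\rho(x)<\kappa$ and $\eta$, $\eta_u$ agree up to $\rho(x)$; hence $K^{(T)}=F+G$ and $D=(F+G)/G$ has size at most $\lambda$. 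Your phrase that the kernel ``depends only on the trunk data'' is the right intuition, but without choosing these representative branches and verifying $K^{(T)}=F+G$, the bound in (2) is not established — and note it is the hypothesis $\lambda^{<\kappa}=\lambda$ together with $K=\bigcup_{\alpha<\kappa}K_\alpha$, not finiteness of supports, that keeps $F$ small.
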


\begin{proof} Let $f\colon N\to C$ be the canonical projection. Take a cardinal $\lambda$ such that $|K|+\xi\leq\lambda = \lambda^{<\kappa}$ and $\lambda^\kappa = 2^\lambda$. Let $T = {}^\kappa \lambda$; then, trivially, $|T| = \lambda^\kappa = 2^\lambda$.

Put $N^\prime = N^{(T)}$. For each $\eta\in T$, we denote by $\nu_\eta\colon N\to N^\prime$ the canonical embedding from $N$ onto the $\eta$th direct summand of $N^\prime$. As in Definition~\ref{d:treemod}, let $G$ be the submodule of $N^\prime$ generated by the set \[J = \{\nu_\eta(x)-\nu_\zeta(x)\mid \eta,\zeta\in T, (\exists \alpha<\kappa)(x\in K_\alpha\;\&\; \eta\restriction\alpha = \zeta\restriction\alpha)\}.\]
We denote by $\pi\colon N^\prime\to N^\prime/G$ the canonical projection and, for each $S\subseteq T$, put $L_S = N^{(S)}+G/G$ and $L = L_T$. So $L$ is the tree module built from $N, \mathfrak K$ and $T$.

\smallskip

To verify that $(1)$ holds true, it suffices to show that $L_S\in\Filt(\mathcal A)$ for each $S\subseteq T$ of cardinality at most $\kappa$. Indeed, $L$ is the directed union of its submodules $L_S$ where $S$ runs through subsets of $T$ of cardinality at most $\kappa$. The associated directed system is clearly $\kappa^+$-continuous.

So let $S = \{\eta_\alpha\mid \alpha<\theta\}$ where $\theta\leq \kappa$ is a~cardinal. Furthermore, for each $\alpha\leq\theta$, set $S_\alpha = \{\eta_\beta\mid \beta<\alpha\}$. We claim that $(L_\alpha\mid \alpha\leq\theta)$ is an $\mathcal A$-filtration of $L_S$ where we put, for convenience, $L_\alpha:= L_{S_\alpha}$ for each $\alpha\leq\theta$.

First of all, $L_0 = 0$ and $L_\alpha = \bigcup_{\beta<\alpha}L_\beta$ for each limit $\alpha\leq\theta$. Take any $\alpha<\theta\leq\kappa$. Observe that \[L_{\alpha+1}/L_\alpha \cong \frac{N^{(S_{\alpha+1})}+G}{N^{(S_\alpha)}+G} \cong \frac{N^{\{\eta_\alpha\}}}{(N^{(S_\alpha)}+G)\cap N^{\{\eta_\alpha\}}}.\] Since $\kappa$ is regular and $|S_\alpha|<\kappa$, we have
\[\gamma := \sup\{\beta<\kappa\mid (\exists \zeta\in S_\alpha)\,\eta_\alpha\restriction \beta = \zeta\restriction\beta\}<\kappa.\]
We claim that $(N^{(S_\alpha)}+G)\cap N^{\{\eta_\alpha\}} = K_\gamma^{\{\eta_\alpha\}}$. First, the inclusion `$\supseteq$' is clear by the definition of $\gamma$ and $G$. (If $\gamma$ does not belong to the set whose supremum $\gamma$ is, we have to use that $K_\gamma = \bigcup_{\delta<\gamma} K_\delta$.) Let us show the other inclusion.

Take any $z = y + x\in N^{\{\eta_\alpha\}}$ where $y\in N^{(S_\alpha)}$, \[x = \sum_{i=1}^n (\nu_{\eta_i}(x_i)-\nu_{\zeta_i}(x_i)) \eqno{(+)}\] belongs to $G$ and the sum comprises of (nonzero) elements from $J$. Assume that, in this sum, we have used the least amount of pairwise distinct elements of $T$ possible. Since $x = z-y\in N^{(S_{\alpha+1})}$, we know by Lemma~\ref{l:aux} that the generators appearing in $(+)$ use only elements from $S_{\alpha+1}$. Using the definition of $\gamma$, we immediately deduce that $z\in K_\gamma^{\{\eta_\alpha\}}$. The second inclusion is proved.

\smallskip

It follows that \[L_{\alpha+1}/L_\alpha \cong \frac{N^{\{\eta_\alpha\}}}{(N^{(S_\alpha)}+G)\cap N^{\{\eta_\alpha\}}} \cong N/K_\gamma\]
which belongs to $\mathcal A$ by our assumption. Thus $L_S\in\Filt(\mathcal A)$.

\smallskip

For (2), we first pick for each $u\in{}^{<\kappa}\lambda = \{h\colon \alpha\to \lambda\mid \alpha<\kappa\}$ an $\eta_u\in T$ which extends~$u$. Since $\lambda^{<\kappa} = \lambda\geq |K|$, the cardinality of the submodule $F = \sum_{u\in\,{}^{<\kappa}\lambda}\nu_{\eta_u}(K)$ of $N^\prime$ is at most $\lambda$. By the definitions of $F$ and $G$, the canonical projection $N^\prime\to N^\prime/(F+G)$ coincides with the coproduct map $f^{(T)}\colon N^\prime \to C^{(T)}$. So $f^{(T)} = g\pi$ for the natural projection $g\colon L\to C^{(T)}$. Put $D = \Ker(g) = F + G/G$. Then $|D|\leq |F|\leq \lambda$.

\end{proof}

In the next lemma, we recall a~typical setting which provides us with a~module $N$ and a~filtration $\mathfrak K$ as in the hypothesis of Lemma~\ref{l:tree}. We call a~filtration $(A_\alpha\mid \alpha\leq \tau)$ \emph{pure} if $A_\alpha$ is pure in $A_\sigma$ (equivalently in $A_{\alpha+1}$) for all $\alpha<\sigma$.

\begin{lem} \label{l:dirlim} Let $\kappa$ be an infinite regular cardinal and $\mathcal N = (N_\alpha, g_{\alpha\beta}\colon N_\beta \to N_\alpha \mid \beta < \alpha<\kappa)$ be (a~well-ordered) $\kappa$-continuous directed system of modules. Assume that $N_0 = 0$. Put $C = \varinjlim N$ and $N = \bigoplus_{\alpha<\kappa} N_\alpha$. Then the canonical ($\kappa$-pure) presentation of $C$

\[\mathcal E\colon 0\longrightarrow K\overset{\subseteq}{\longrightarrow} N \overset{f}{\longrightarrow} C \longrightarrow 0\]
is the direct limit of the $\kappa$-continuous well-ordered directed system of pure short exact sequences
\[\mathcal E_\alpha\colon 0\longrightarrow K_\alpha\overset{\subseteq}{\longrightarrow} \bigoplus_{\beta<\alpha}N_\beta \overset{f_\alpha}{\longrightarrow} N_{\alpha^\prime} \longrightarrow 0,\quad \alpha<\kappa,\] where we set $\alpha^\prime = \alpha$ if $\alpha$ is limit (including zero), and $\alpha^\prime = \alpha-1$ if $\alpha$ is a~successor ordinal. Furthermore, $f_\alpha\restriction N_\beta = g_{\alpha^\prime\beta}$ for each $\beta<\alpha$. Finally, the connecting morphisms between $\mathcal E_\beta$ and $\mathcal E_\alpha$, for $\beta<\alpha$, comprise (from left to right) of inclusion, canonical split inclusion and $g_{\alpha^\prime\beta^\prime}$. (We put $g_{\gamma\gamma} = \ident_{N_\gamma}$ for each $\gamma<\kappa$.)

In particular, setting $K_\kappa = K$, we get a~pure filtration $\mathfrak K= (K_\alpha\mid \alpha\leq\kappa)$ of $K$.
\end{lem}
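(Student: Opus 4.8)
The plan is to define the sequences $\mathcal E_\alpha$ explicitly, check that each is pure-exact, verify that the displayed connecting morphisms assemble them into a $\kappa$-continuous directed system, compute the direct limit termwise via exactness of directed colimits, and finally extract the pure filtration.

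Writing $P_\alpha=\bigoplus_{\beta<\alpha}N_\beta$, the prescription $f_\alpha\restriction N_\beta=g_{\alpha'\beta}$ defines a homomorphism $f_\alpha\colon P_\alpha\to N_{\alpha'}$ (well-defined, since it is specified on each summand), and I set $K_\alpha:=\Ker f_\alpha$. Two cases. If $\alpha=\gamma+1$ is a successor, then $\alpha'=\gamma<\alpha$, so $N_\gamma$ occurs in $P_\alpha$ and $f_\alpha\nu_\gamma=g_{\gamma\gamma}=\ident_{N_\gamma}$; thus $\nu_\gamma$ is a section, $\mathcal E_\alpha$ is split exact, and in particular pure-exact. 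If $\alpha$ is a limit (or $0$), then $\alpha'=\alpha$ and the maps $f_\alpha\restriction N_\beta=g_{\alpha\beta}$ are exactly the structure maps of the colimit $N_\alpha=\varinjlim_{\beta<\alpha}N_\beta$ (by $\kappa$-continuity of $\mathcal N$); hence $\mathcal E_\alpha$ is the canonical presentation of a directed colimit, which is surjective with pure kernel as recalled in the Preliminaries. So every $\mathcal E_\alpha$ is pure-exact, and surjectivity of $f_\alpha$ gives exactness of $\mathcal E_\alpha$.

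For the connecting morphisms, with $\beta<\alpha$ the middle map is the split inclusion $P_\beta\hookrightarrow P_\alpha$ and the right map is $g_{\alpha'\beta'}$; one first checks $\beta'\leq\alpha'$ in all four parity combinations of $\beta,\alpha$. The right-hand square commutes because on each summand $N_\gamma$ with $\gamma<\beta$ both composites equal $g_{\alpha'\gamma}$, using $g_{\alpha'\beta'}g_{\beta'\gamma}=g_{\alpha'\gamma}$ (valid as $\gamma\leq\beta'\leq\alpha'$); the left square commutes as both verticals are inclusions, so the induced map $K_\beta\to K_\alpha$ is the inclusion. At a limit $\lambda<\kappa$ continuity holds for the middle terms, $P_\lambda=\bigcup_{\alpha<\lambda}P_\alpha$, and for the right terms, $N_\lambda=\varinjlim_{\alpha<\lambda}N_{\alpha'}$ since $\{\alpha'\mid\alpha<\lambda\}$ is cofinal in $\lambda$; by exactness of directed colimits the kernels satisfy $K_\lambda=\bigcup_{\alpha<\lambda}K_\alpha$ too. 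Passing to the colimit of the whole system: the middle colimit is $\bigcup_{\alpha<\kappa}P_\alpha=N$, the right colimit is $\varinjlim_{\alpha<\kappa}N_{\alpha'}=C$ (cofinality of $\{\alpha'\}$ in $\kappa$, with maps agreeing with $\mathcal N$), and $\varinjlim f_\alpha$ restricts on each $N_\beta$ to the structure map $N_\beta\to C$, hence equals $f$. Exactness of directed colimits then yields $\varinjlim K_\alpha=\Ker f=K$ and $\varinjlim\mathcal E_\alpha=\mathcal E$; since the kernel connecting maps are inclusions, $K=\bigcup_{\alpha<\kappa}K_\alpha$, and with the limit continuity above, $\mathfrak K=(K_\alpha\mid\alpha\leq\kappa)$ (with $K_\kappa=K$) is a filtration.

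It remains to prove purity of $\mathfrak K$, which I expect to be the one genuinely nonroutine point: the naive guess $K_\alpha=K\cap P_\alpha$ fails, the difference being governed by the kernel of the structure map $N_{\alpha'}\to C$. Instead I would use transitivity and downward inheritance of purity. From pure-exactness of $\mathcal E_\alpha$, the submodule $K_\alpha$ is pure in $P_\alpha$; as $P_\alpha$ is a direct summand, hence pure, in $N$, transitivity gives $K_\alpha$ pure in $N$; and since $K_\alpha\subseteq K\subseteq N$, a submodule pure in $N$ and contained in $K$ is automatically pure in $K$ (because $K_\alpha\otimes X\to K\otimes X\to N\otimes X$ has injective composite). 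Thus each $K_\alpha$ is pure in $K=K_\kappa$ (equivalently in $K_{\alpha+1}$), so $\mathfrak K$ is a pure filtration. Apart from this inheritance argument, the remaining work is the careful bookkeeping of the successor/limit reindexing $\alpha\mapsto\alpha'$.
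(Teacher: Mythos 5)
Your proof is correct and follows essentially the same route as the paper, whose own proof is a one-line remark that the successor-stage sequences $\mathcal E_{\gamma+1}$ split (hence all $\mathcal E_\alpha$, being direct limits of split ones at limit stages, are pure, and so is $\mathfrak K$). You have merely written out the details the paper leaves implicit --- the reindexing $\alpha\mapsto\alpha'$, the commutativity of the connecting squares, the identification $\varinjlim\mathcal E_\alpha=\mathcal E$, and the purity of each $K_\alpha$ in $K$ via purity in $N$ --- and all of these checks are sound.
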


\begin{proof} Easy. Notice that $\mathcal E_\alpha$ is a~split short exact sequence if $\alpha<\kappa$ is a~successor ordinal or zero. As a~consequence, $\mathcal E_\alpha$ is pure for each $\alpha<\kappa$. In particular, $\mathfrak K$ is pure as well. See also \cite[Lemma~2.1]{GG0}.
\end{proof}

Now, we are ready to prove the promised push-down result.

\begin{thm} \label{t:push-down} Let $\kappa$ be an infinite regular cardinal and $\mathcal A$ be a~class of modules closed under taking direct limits of $\kappa^+$-continuous directed systems. Then $\mathcal A$ is closed under taking direct limits of $\kappa$-continuous directed systems provided that either one of the following conditions hold:
\begin{enumerate}
	\item[(i)] $\mathcal A = {}^\perp\mathcal B$ for a~class $\mathcal B\subseteq\ModR$;
	\item[(ii)] $\mathcal A = \Filt(\mathcal S)$ for a~set $\mathcal S\subseteq\ModR$ and $\mathcal A$ is closed under direct summands.
\end{enumerate}
\end{thm}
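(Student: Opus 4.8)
The plan is to reduce the statement to a well-ordered continuous chain of length $\kappa$, push it through Lemmas~\ref{l:dirlim} and~\ref{l:tree}, and then extract $C$ from the resulting short exact sequence by a cardinality argument whose flavour differs in the two cases.

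First I would reduce to well-ordered continuous chains of length $\kappa$. If $\mathcal N=(N_\alpha\mid\alpha<\mu)$ is a continuous chain of modules from $\mathcal A$ with $\cf(\mu)\geq\kappa^+$, then every totally ordered subset of size $\leq\kappa$ has its supremum realised as a colimit inside the chain, so $\mathcal N$ is already $\kappa^+$-continuous and its limit lies in $\mathcal A$ by hypothesis; if $\cf(\mu)=\kappa$, a cofinal increasing $\kappa$-sequence rewrites $\varinjlim\mathcal N$ as the limit of a continuous chain of length exactly $\kappa$. A continuous-union/induction argument on $|I|$ (replacing each stage by a $\kappa$-directed, $\kappa$-continuously closed subsystem of smaller cardinality and gluing, the limit stages being absorbed by the $\kappa^+$-continuous closure) reduces an arbitrary $\kappa$-continuous directed system to such chains. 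Thus I may assume $C=\varinjlim_{\alpha<\kappa}N_\alpha$ for a continuous chain with $N_0=0$ and all $N_\alpha\in\mathcal A$, and apply Lemma~\ref{l:dirlim} to obtain $0\to K\to N\to C\to 0$ with $N=\bigoplus_{\alpha<\kappa}N_\alpha$ and a pure filtration $\mathfrak K=(K_\alpha\mid\alpha\leq\kappa)$ of $K$. Since $N/K_\alpha\cong N_{\alpha'}\oplus\bigoplus_{\alpha\leq\beta<\kappa}N_\beta$ and $\mathcal A$ is closed under arbitrary direct sums in both cases (for~(i) because $\Ext^1_R(-,B)$ turns sums into products, for~(ii) by concatenating filtrations), we get $N/K_\alpha\in\mathcal A$ for each $\alpha<\kappa$, so the hypotheses of Lemma~\ref{l:tree} hold.

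Fixing a cardinal parameter $\xi$ (specified below), Lemma~\ref{l:tree} yields $\lambda\geq\xi$ and a short exact sequence $0\to D\to L\xrightarrow{g}C^{(2^\lambda)}\to 0$ with $|D|\leq\lambda$ and $L$ a $\kappa^+$-continuous direct limit of modules from $\Filt(\mathcal A)$. Now $\Filt(\mathcal A)=\mathcal A$: in case~(i) by the Eklof lemma (${}^\perp\mathcal B$ is closed under transfinite extensions), in case~(ii) by the idempotency $\Filt(\Filt(\mathcal S))=\Filt(\mathcal S)$. Hence $L\in\mathcal A$ by the assumed closure under $\kappa^+$-continuous limits, and it remains to deduce $C\in\mathcal A$ from the displayed sequence.

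For case~(i), fix $B\in\mathcal B$ and run the construction with $\xi\geq|B|$, so $|B|\leq\lambda$. Applying $\Hom_R(-,B)$ and using $\Ext^1_R(L,B)=0$ shows that $\Ext^1_R(C^{(2^\lambda)},B)\cong\Ext^1_R(C,B)^{2^\lambda}$ is a homomorphic image of $\Hom_R(D,B)$, whence $|\Ext^1_R(C,B)^{2^\lambda}|\leq|\Hom_R(D,B)|\leq|B|^{|D|}\leq\lambda^\lambda=2^\lambda$. Were $\Ext^1_R(C,B)\neq 0$, the left-hand side would have cardinality at least $2^{2^\lambda}>2^\lambda$, a contradiction; so $\Ext^1_R(C,B)=0$, and as $B$ was arbitrary, $C\in{}^\perp\mathcal B=\mathcal A$.

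For case~(ii), choose $\mu$ with $\mathcal S=\mathcal S^{<\mu}$ and run the construction with $\xi\geq\mu$, so $\lambda\geq\mu$ and $L\in\Filt(\mathcal S^{<\mu})$. For $\lambda$ large enough the Hill lemma (\cite{GT0}) supplies a submodule $H$ with $D\subseteq H$, $|H|\leq\lambda$, and $L/H\in\Filt(\mathcal S)=\mathcal A$. Writing $C^{(2^\lambda)}=\bigoplus_{\eta\in T}C_\eta$ with $|T|=2^\lambda$, the image $g(H)$ has size $\leq\lambda$, hence lies in the sub-coproduct $C^{(S_0)}$ over its support $S_0$, where $|S_0|\leq\lambda<2^\lambda=|T|$. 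Since $\Ker g=D\subseteq H$, we get $L/H\cong C^{(T)}/g(H)\cong (C^{(S_0)}/g(H))\oplus C^{(T\setminus S_0)}$, so $C^{(T\setminus S_0)}$ is a direct summand of $L/H\in\mathcal A$; as $T\setminus S_0\neq\emptyset$ and $\mathcal A$ is closed under direct summands, $C\in\mathcal A$. The main obstacle is the first reduction to chains of length $\kappa$: a general $\kappa$-directed index set need have no cofinal chain, so one must realise the colimit as a continuous union of $\kappa$-continuously closed subsystems of smaller cardinality, the $\kappa^+$-continuous closure being exactly what absorbs the limit stages; the two finishing arguments are then short, the only delicate points being the cardinal arithmetic $\lambda^\lambda=2^\lambda$ versus $2^{2^\lambda}$ in case~(i) and the support bound $|S_0|\leq\lambda<2^\lambda$ in case~(ii).
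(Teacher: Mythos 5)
Your proposal is correct and follows essentially the same route as the paper's proof: reduction to well-ordered $\kappa$-continuous chains of length $\kappa$ via a $\kappa^+$-continuous system of smaller subsystems, then Lemmas~\ref{l:dirlim} and~\ref{l:tree}, finished by Hunter's counting argument in case~(i) and the Kaplansky/Hill-plus-support argument in case~(ii). Your explicit remarks that $\Filt(\mathcal A)=\mathcal A$ (Eklof's lemma, resp.\ idempotency of $\Filt$) and that $\mathcal A$ is closed under direct sums are points the paper leaves implicit, but the substance is identical.
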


\begin{proof} We prove both cases, (i) and (ii), together. As the first step, we are going to reduce the problem to inspecting the direct limits of \emph{well-ordered} $\kappa$-continuous directed systems \emph{indexed by $\kappa$}: suppose that $\mathcal M = (M_i,h_{ji}\colon M_i\to M_j \mid i\leq j\in I)$ is an \emph{arbitrary} $\kappa$-continuous directed system of modules from $\mathcal A$. We have to show that $\varinjlim\mathcal M \in\mathcal A$. For the sake of nontriviality, assume that $(I,\leq)$ does not have the largest element. In particular, $|I|\geq\kappa$.

Consider the directed system, $\mathcal C$, formed by the direct limits of directed subsystems of $\mathcal M$ of cardinality $\kappa$ and canonical factorization maps between them. (Thus if $g\colon C_1\to C_2$ is a~homomorphism in $\mathcal C$, then $C_1 = \varinjlim \mathcal D_1$, $C_2 = \varinjlim \mathcal D_2$ where $\mathcal D_1, \mathcal D_2$ are directed subsystems of $\mathcal M$ of cardinality $\kappa$ such that $\mathcal D_1\subset\mathcal D_2$.) It follows that $\varinjlim\mathcal M = \varinjlim\mathcal C$ and $\mathcal C$ is $\kappa^+$-continuous. We claim that $\mathcal C$ consists of modules from $\mathcal A$; this would immediately result in $\varinjlim\mathcal M = \varinjlim\mathcal C\in\mathcal A$ using our assumption on $\mathcal A$.

\smallskip

So let $C\in\mathcal C$ be arbitrary. Since $\mathcal M$ is $\kappa$-continuous, the $C$ is, in fact, the direct limit of a~$\kappa$-continuous well-ordered subsystem $(N_\alpha, g_{\alpha\beta}\colon N_\beta\to N_\alpha \mid \beta<\alpha<\kappa)$ of $\mathcal M$. We thus reduced our considerations to the promised special type of directed systems.

Let us assume, w.l.o.g., that $N_0 = 0$. Put $N = \bigoplus_{\alpha<\kappa} N_\alpha$, and consider the filtration $\mathfrak K = (K_\alpha\mid \alpha\leq\kappa)$ from Lemma~\ref{l:dirlim}. Then $N/K_\alpha\cong \bigoplus_{\alpha\leq\beta<\kappa} N_\beta \oplus \bigoplus_{\beta<\alpha} N_\beta/K_\alpha \in\mathcal A$ for each $\alpha<\kappa$ since $\bigoplus_{\beta<\alpha} N_\beta/K_\alpha \cong \Img(f_\alpha) = N_{\alpha^\prime}\in\mathcal A$. Here, $\alpha^\prime$ is as in Lemma~\ref{l:dirlim}.

We fix an arbitrary $B\in\mathcal B$ and put $\xi = |B|$ if we use case (i). If in case (ii), let $\xi$ be such that all modules in $\mathcal S$ are $\xi$-presented. From Lemma~\ref{l:tree}, we obtain a~short exact sequence \[0\longrightarrow D\longrightarrow L\overset{g}{\longrightarrow} C^{(2^\lambda)}\longrightarrow 0\] for a~cardinal $\lambda\geq \xi$. By part (1) of the lemma and our assumption, it follows that $L\in\mathcal A$. By part (2), we know that $|D|\leq\lambda$.

If (i) holds true, we conclude everything with the classic Hunter's cardinal counting argument: applying the functor $\Hom_R(-,B)$ on the short exact sequence above, we get
\[0\longrightarrow\Hom_R(C,B)^{2^\lambda}\longrightarrow\Hom_R(L,B)\longrightarrow\Hom_R(D,B)\overset{\delta}{\longrightarrow}\Ext^1_R(C,B)^{2^\lambda}\longrightarrow 0.\]
From $|D|,|B|\leq\lambda$, it follows that $|\Hom_R(D,B)|\leq 2^\lambda$, and so $\Ext^1_R(C,B) = 0$ since $\delta$ is a~surjective mapping.
For an arbitrary $B\in\mathcal B$, we managed to show $C\in {}^\perp B$, whence $C\in {}^\perp\mathcal B = \mathcal A$.

\smallskip

If (ii) holds true, the class $\mathcal A$ is $\lambda$-Kaplansky by \cite[Theorem~10.3(a)]{GT0}, and so we can find a~$\lambda$-presented submodule $P$ of $L$ such that $D\subseteq P$ and $C^{(2^\lambda)}/g(P)\cong L/P\in \mathcal A$. However, $g(P)$ is $\lambda$-generated whence there is an~$X\subset 2^\lambda$ with $|X| \leq \lambda$ such that \[C^{(2^\lambda)}/g(P)\cong \frac{C^{(X)}}{g(P)}\oplus C^{(2^\lambda\setminus X)} \in \mathcal A.\] Since $\mathcal A$ is closed under direct summands, we finally get that $C\in\mathcal A$.
\end{proof}

\begin{exm} \label{e:ML} In Theorem~\ref{t:push-down}, it is not enough to assume that $\mathcal A = \Filt(\mathcal A)$ and that $\mathcal A$ be closed under direct summands and under taking direct limits of $\kappa^+$-continuous directed systems. As an example, take any ring $R$ which is not right perfect and let $\mathcal A$ be the class of all $\aleph_1$-projective (equivalently, flat Mittag-Leffler) modules, cf.\ \cite[Section~3.2]{GT0}.

By \cite[Corollary~3.20(a)]{GT0}, $\mathcal A = \Filt(\mathcal A)$ and $\mathcal A$ is even closed under pure submodules. Moreover, $\mathcal A$ is closed under taking direct limits of $\aleph_1$-continuous directed systems by \cite[Proposition~2.2]{HT0}. Nevertheless, $\mathcal A$ is not closed under countable direct limits since, over rings which are not right perfect, there exist countably presented flat modules which are not ($\aleph_1$-)projective by the well-known result due to Hyman Bass.
\end{exm}

Before stating the last corollary of this section, let us recall that a~ring $R$ is \emph{right $\kappa$-Noetherian} if every right ideal of $R$ is $\kappa$-generated.

\begin{cor} \label{c:aleph_n} Let $n<\omega$ and let $(\mathcal A,\mathcal B)$ be a~cotorsion pair generated by a~class of $<\!\aleph_n$-presented modules. More generally, let $\mathcal A = \Filt(\mathcal S)$ where $\mathcal S$ is a~class of $<\!\aleph_n$-presented modules. Then $\mathcal A$ is a~covering class if and only if $\mathcal A$ is closed under direct limits.

In particular, for every $m,n<\omega$, Enochs Conjecture holds over a~right $\aleph_n$-noetherian ring for the class $\mathcal P_m$ consisting of all modules of projective dimension at most $m$.
\end{cor}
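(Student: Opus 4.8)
The plan is to deduce the equivalence from the two engines already in place, Corollary~\ref{c:Silvana} and the push-down Theorem~\ref{t:push-down}, and then to reduce the $\mathcal P_m$ statement to the first part. Write $\kappa=\aleph_n$. In the general case $\mathcal A=\Filt(\mathcal S)$ we have $\mathcal S\subseteq\mathcal A^{<\kappa}$ (and $\mathcal S$ may be replaced by a set of representatives, there being only set-many $<\!\kappa$-presented modules up to isomorphism); since $\Filt(\Filt(\mathcal X))=\Filt(\mathcal X)$, this yields $\mathcal A=\Filt(\mathcal A^{<\kappa})$, whence $\mathcal A$ is precovering by \cite[Theorem~7.21]{GT0} and $<\!\kappa$-Kaplansky by \cite[Theorem~10.3(a)]{GT0}. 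In the cotorsion-pair case $\mathcal A={}^\perp\mathcal B$ is special precovering and, for $n\geq1$, equals $\Filt(\mathcal A^{<\kappa})$ by \cite[Theorem~7.13]{GT0}. The ``if'' direction is then immediate in all cases: a precovering class closed under direct limits is covering by Enochs' theorem \cite{Eno}.

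For the ``only if'' direction, assume $\mathcal A$ is covering. First I would invoke Corollary~\ref{c:Silvana} (or Corollary~\ref{c:Silvana-cotpairs} in the cotorsion-pair case, which also disposes of the delicate $n=0$ situation) to obtain that $\mathcal A$ is closed under $\kappa$-pure-epimorphic images, hence under direct limits of $\kappa$-directed, and in particular $\kappa$-continuous, systems. If $n=0$ this is already closure under arbitrary direct limits, since every directed poset is $\aleph_0$-directed and the canonical presentation of any direct limit is pure; so assume $n\geq1$. Now I descend from $\aleph_n$ to $\aleph_0$ by $n$ applications of Theorem~\ref{t:push-down}: at the step with $\kappa=\aleph_k$ (for $k=n-1,\dots,0$) closure under $\aleph_{k+1}$-continuous limits is fed in and closure under $\aleph_k$-continuous limits comes out, hypothesis~(i) holding throughout in the cotorsion-pair case and hypothesis~(ii) in the general case (the required closure under direct summands being automatic, as covering classes are summand-closed). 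After the last step one reaches closure under $\aleph_0$-continuous, i.e.\ arbitrary, direct limits. The finiteness of this descent is precisely what forces $n<\omega$: Theorem~\ref{t:push-down} only bridges the successor step $\kappa^+\to\kappa$.

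For the $\mathcal P_m$ assertion it suffices to exhibit $\mathcal P_m$, over a right $\aleph_n$-Noetherian ring, as the left-hand class of a cotorsion pair generated by $<\!\aleph_{n+1}$-presented modules, and then to quote the first part with $n+1$ in place of $n$. The pair $(\mathcal P_m,\mathcal P_m^\perp)$ is a (hereditary) cotorsion pair, so the real content is the deconstruction $\mathcal P_m=\Filt(\mathcal P_m^{<\aleph_{n+1}})$, which by Eklof's lemma is equivalent to $(\mathcal P_m^{<\aleph_{n+1}})^\perp=\mathcal P_m^\perp$. The algebraic input I would use is that, over a right $\aleph_n$-Noetherian ring, every submodule of an $\leq\!\aleph_n$-generated module is again $\leq\!\aleph_n$-generated: filtering a free module $R^{(\lambda)}$, $\lambda\leq\aleph_n$, by the $R^{(\alpha)}$ exhibits any submodule as filtered by $\leq\aleph_n$ right-ideal-like factors, so $\aleph_n\cdot\aleph_n=\aleph_n$ generators suffice. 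Iterating this along a projective resolution shows that an $\leq\!\aleph_n$-generated module of projective dimension $\leq m$ has a resolution by $\leq\!\aleph_n$-generated projectives, hence is $\leq\!\aleph_n$-presented and lies in $\mathcal P_m^{<\aleph_{n+1}}$; feeding this size bound into the Kaplansky/Hill deconstruction of $\mathcal P_m$ produces the required $\mathcal P_m^{<\aleph_{n+1}}$-filtration of an arbitrary $M\in\mathcal P_m$.

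The hard part is not the equivalence itself: that is a careful but routine iteration of Corollary~\ref{c:Silvana} and Theorem~\ref{t:push-down}, the only bookkeeping being to align $\kappa$ with $\kappa^+$ and to confirm that conditions~(i)/(ii) persist at every step (they do, since being $\mathcal A={}^\perp\mathcal B$, resp.\ being $\Filt$ of a set and summand-closed, is unaffected by the descent). The genuine obstacle is the deconstructibility $\mathcal P_m=\Filt(\mathcal P_m^{<\aleph_{n+1}})$ over right $\aleph_n$-Noetherian rings: one must both establish the submodule-size lemma and check that the Kaplansky approximations can be chosen so as to keep the quotients inside $\mathcal P_m$ while holding the number of generators strictly below $\aleph_{n+1}$.
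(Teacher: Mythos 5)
Your argument is correct and follows essentially the same route as the paper: precovering plus Enochs' theorem for the ``if'' direction, and Theorem~\ref{t:pure-epi}/Corollary~\ref{c:Silvana} followed by an $n$-fold application of Theorem~\ref{t:push-down} (using that covering classes are closed under direct summands, and that $\aleph_0$-continuous means arbitrary) for the ``only if'' direction. The only difference is that the deconstruction $\mathcal P_m=\Filt(\mathcal P_m^{<\aleph_{n+1}})$ over right $\aleph_n$-Noetherian rings, which you sketch from scratch via the submodule-size lemma, is simply quoted in the paper as \cite[Lemma~8.9]{GT0}.
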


\begin{proof} There is a representative set of $<\!\aleph_n$-presented modules, whence $\mathcal A$ is precovering by \cite[Theorem~6.11]{GT0} (or, more generally, by \cite[Theorem~7.21]{GT0}), and the if part follows from the well-known Enochs' result, cf.\ \cite[Theorem~5.31]{GT0}. The only-if part follows from Theorem~\ref{t:pure-epi} and $n$-fold application of Theorem~\ref{t:push-down}; since $\mathcal A$ is covering, it is closed under direct summands. Notice that any directed system of modules is $\aleph_0$-continuous by definition.

The last statement follows by \cite[Lemma~8.9]{GT0}.
\end{proof}

\section{Consistency of the Enochs Conjecture for $\Filt(\mathcal S)$ where $\mathcal S$ is a~set}
\label{sec:cons}

We shall work with the following additional set-theoretic hypothesis consistent with ZFC. Recall, that a~set $E\subseteq\kappa^+$ is \emph{non-reflecting} if $E\cap \delta$ is non-stationary for each ordinal $\delta<\kappa^+$ with uncountable cofinality.

\medskip

\noindent\textbf{Assumption $(*)$.} \textit{For each infinite regular cardinal $\theta$, there is a~proper class of cardinals $\kappa$ such that:}
\begin{enumerate}
	\item \textit{There exists a~non-reflecting stationary set $E\subseteq\kappa^+$ consisting of ordinals with cofinality $\theta$, and}
 	\item \textit{$\kappa^{<\theta} = \kappa$.}
\end{enumerate}

\smallskip

The assumption $(*)$ holds, for instance, in the constructible universe L. More generally, $(*)$ holds true if the square principle $\square_\kappa$ holds for each strong limit singular cardinal $\kappa$. On the other hand, $(*)$ cannot hold, e.g., in a~model of ZFC which contains a~strongly compact cardinal.

\smallskip

We are going to recall a~result from \cite[Proof of 2.1]{ES} saying that, for $\kappa$ as in Assumption~$(*)$, there exist almost free trees of size $\kappa^+$ and height $\theta$ in the sense of the following definition, cf.\ \cite[1.3 and 1.5]{ES}.

\begin{defn} \label{d:trees} Let $T\subseteq {}^\theta \lambda$ where $\lambda, \theta$ are infinite regular cardinals.
\begin{enumerate}
	\item For a~cardinal $\xi$, we say that $T$ is \emph{$\xi$-free} if, for each $S\subseteq T$ with $|S|<\xi$, there exists a~map $\psi\colon S\to \theta$ such that $\{\{\eta\restriction \gamma\mid \psi(\eta)<\gamma<\theta\}\mid \eta\in S\}$ is a~system of pairwise disjoint sets. Moreover, for technical reasons, we require that $\psi(\eta)$ is either 0 or a~successor ordinal for each $\eta\in S$.
	\item We call $T$ \emph{nonfree} if there is a~one-one enumeration $T = \{\eta_\alpha\mid \alpha<\lambda\}$ and a~stationary set $F\subseteq \lambda$ such that, for each $\delta\in F$ and every ordinal $\mu<\theta$, \[\eta_\delta\restriction\mu\in\{\eta_\alpha\restriction\mu \mid \alpha<\delta\}.\]
\end{enumerate}
If $T$ is $\lambda$-free and nonfree, then $T$ is called an \emph{almost free tree of size $\lambda$ and height~$\theta$}.
\end{defn}

Technically, the elements of the set $T$ in the definition above encode\footnote{An $\eta\in T$ encodes the branch $\{\eta\restriction\mu\mid \mu<\theta\}$.} branches of the tree $\{\eta\restriction\mu \mid \eta \in T, \mu<\theta\}$ which is ordered by mapping extension, i.e.\ by inclusion if we formally treat mappings as sets of ordered pairs.

Vaguely put, $\xi$-freeness entails that any $<\!\xi$ branches have nonempty, pairwise disjoint end segments. The nonfreeness, on the other hand, means that all the branches can be well-ordered in a~way that it happens very often (i.e.\ on a~stationary set) that each initial segment of a~branch coincides with the corresponding initial segment of some smaller branch in the well ordering.

For the reader's convenience, we include here, with a~slightly supplemented proof, the result on the existence of almost free trees which appears on page 503 in \cite{ES}.

\begin{prop} \label{p:trees} Let $\kappa$ be an infinite cardinal and put $\lambda = \kappa^+$. Assume that $\kappa^{<\theta} = \kappa$ and that there exists a~non-reflecting stationary set $E\subseteq\lambda$ consisting of ordinals with cofinality $\theta$. Then there exists an~almost free $T\subseteq {}^\theta\lambda$.
\end{prop}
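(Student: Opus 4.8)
The plan is to realise $T$ as a~family of $\lambda$ pairwise distinct branches $\{\eta_\alpha\mid\alpha<\lambda\}\subseteq{}^\theta\lambda$, constructed by transfinite recursion on $\alpha<\lambda$, in such a~way that the non-reflecting stationary set $E$ simultaneously produces the non-freeness (witnessed by $F=E$) and, through its non-reflection, the $\lambda$-freeness. I would begin with the cardinal arithmetic. Since there are ordinals of cofinality $\theta$ below $\lambda=\kappa^+$, and any such ordinal has cardinality $\le\kappa$, we have $\theta\le\kappa<\lambda$; and from $\kappa^{<\theta}=\kappa$ together with the Hausdorff formula $(\kappa^+)^\mu=\kappa^+\cdot\kappa^\mu$ (valid for $\mu<\kappa^+$) one gets $\lambda^\mu=\lambda$ for every $\mu<\theta$, hence $\lambda^{<\theta}=\lambda$. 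Thus the node set ${}^{<\theta}\lambda$ has cardinality $\lambda$, and since $\lambda$ is regular and $|\alpha|\le\kappa$ for every $\alpha<\lambda$, at each stage of the recursion only fewer than $\lambda$ nodes have been used, so fresh values in $\lambda$ are always available.

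Next, for each $\delta\in E$ I fix a~strictly increasing continuous function $c_\delta\colon\theta\to\delta$ cofinal in $\delta$, which exists because $\cf(\delta)=\theta$. The branches are then defined so as to serve two opposite purposes. At stages $\alpha\notin E$ I introduce \emph{fresh} branches whose values are chosen away from everything used so far; these create the ``room to split'' needed for freeness. At a~stage $\delta\in E$ I instead form $\eta_\delta$ as a~\emph{limit} of earlier branches along the ladder $c_\delta$: the recursion is arranged to maintain the invariant that, below $\delta$, one can select a~cofinal chain $(\eta_{c_\delta(i)}\mid i<\theta)$ whose members agree on initial segments of lengths $\mu_i\uparrow\theta$, and one sets $\eta_\delta=\bigcup_{i<\theta}\bigl(\eta_{c_\delta(i)}\restriction\mu_i\bigr)$. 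This is a~genuinely new branch each of whose initial segments $\eta_\delta\restriction\mu$ coincides with $\eta_{c_\delta(i)}\restriction\mu$ for a~suitable $i$ with $c_\delta(i)<\delta$. Taking $F=E$ then yields at once the non-freeness clause of Definition~\ref{d:trees}, and $E$ is stationary by hypothesis.

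The $\lambda$-freeness is the heart of the argument and the place where non-reflection is indispensable. I would prove, by induction on $\delta\le\lambda$, that every $S\subseteq\{\eta_\alpha\mid\alpha<\delta\}$ with $|S|<\lambda$ admits a~separating map $\psi\colon S\to\theta$; recall that disjointness of the end segments means exactly that for distinct $\eta,\eta'\in S$ the splitting level $\Delta(\eta,\eta')$ is bounded by $\psi(\eta)$ or by $\psi(\eta')$, and the requirement that $\psi$ take only the value $0$ or successor values is met by rounding each value up, which only shrinks the end segments. Writing $a\subseteq\delta$ for the index set of $S$, so $|a|\le\kappa$, the successor case and the case $\cf(\delta)>\kappa$ (where $a$ is bounded below $\delta$) reduce immediately to smaller ordinals, the fresh branches splitting early. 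The genuinely new case is $\delta$ of uncountable cofinality $\le\kappa$: here $E\cap\delta$ is non-stationary, so I fix a~club $C\subseteq\delta$ with $C\cap E=\varnothing$ and write $S$ as a~continuous increasing union of smaller pieces indexed along $C$; because no point of $C$ lies in $E$, the branches appearing at each step split from all earlier ones below a~level that stays bounded below $\theta$, which lets me amalgamate the separating maps supplied by the inductive hypothesis into a~single $\psi$ on $S$.

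I expect the main obstacle to be the interplay between the construction and the freeness proof, in two linked respects. On the construction side, the coherence needed to form the $E$-branches as limits cannot be imposed after the fact---later stages cannot foresee the ladder demands of earlier ones---so it must be woven into the recursion as a~maintained invariant, forcing fresh branches to be chosen already in harmony with every limit they will eventually feed. On the freeness side, the subtle point is to keep the amalgamated splitting levels strictly below $\theta$: because $|S|$ may well exceed $\theta$, a~naive enumeration of $S$ would drive the required $\psi$-values up to $\theta$, and it is precisely the club $C$ disjoint from $E$---that is, the non-reflection of $E$---that reorganises $S$ into boundedly splitting increments. Making this bound rigorous is the crux of the proposition.
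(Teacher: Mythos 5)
There is a genuine gap at the heart of your construction, and it is exactly the point you yourself flag as ``the main obstacle''. You want the branches $\eta_\alpha$ for $\alpha\notin E$ to be \emph{fresh}, with values ``chosen away from everything used so far'', and simultaneously you want, for every later $\delta\in E$ with $\alpha$ in the range of $c_\delta$, that $\eta_\alpha$ agree on a long initial segment with the other branches $\eta_{c_\delta(i)}$ feeding $\eta_\delta$. These two demands are contradictory as stated: a fresh branch agrees with no earlier branch on any nonempty initial segment, whereas the coherence needed to form $\eta_\delta=\bigcup_{i<\theta}\bigl(\eta_{c_\delta(i)}\restriction\mu_i\bigr)$ with $\mu_i\uparrow\theta$ requires arbitrarily long agreement; worse, a single $\alpha$ may lie on the ladders of many future $\delta\in E$, each imposing its own coherence demand that cannot be foreseen at stage $\alpha$. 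Saying the invariant ``must be woven into the recursion'' does not define the recursion, and your freeness argument then leans on precisely the early splitting of fresh branches that coherence forbids. So the construction is not actually carried out, and with it the nonfreeness claim ($F=E$ ``at once'') is unsupported. Note also that in your write-up the hypothesis $\kappa^{<\theta}=\kappa$ is used only to count nodes of ${}^{<\theta}\lambda$, which is not where it is needed.

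The paper avoids the whole difficulty by not performing any recursion: it simply puts $T=\{\zeta_\delta\mid\delta\in E\}$, where $\zeta_\delta\colon\theta\to\lambda$ is a strictly increasing ladder cofinal in $\delta$, i.e.\ the branches \emph{are} the ladder functions. Nonfreeness is then not free of charge: one uses $\kappa^{<\theta}=\kappa$ to show that the set $C$ of $\gamma<\lambda$ such that every initial segment $\zeta_\alpha\restriction\mu$ bounded below $\gamma$ already equals $\zeta_\beta\restriction\mu$ for some $\beta\in E$ with $\beta<\gamma$ is a club, and takes $F$ to be (the re-indexing along the order isomorphism $\lambda\to E$ of) the stationary set $C\cap E$ --- in particular $F$ is not all of $E$. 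Your $\lambda$-freeness outline is structurally close to the paper's (induction on $\sup S$, peeling off the top branch at successors, a club disjoint from $E$ in the limit case), but the separation mechanism there is different and does not rely on freshness: since the range of $\zeta_\alpha$ is cofinal in $\alpha$, one can push $\psi(\alpha)$ up so that $\zeta_\alpha(\psi(\alpha))$ lands strictly between consecutive points of the club, and end segments coming from different intervals are then automatically disjoint because they take values in disjoint intervals of $\lambda$. I would recommend reworking your proof around this direct definition of $T$; as it stands, the object you reason about has not been constructed.
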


\begin{proof} For each $\delta\in E$ fix a~strictly increasing mapping $\zeta_\delta\colon \theta\to\lambda$ whose range forms a~cofinal subset of $\delta$. Set $T = \{\zeta_\delta\mid \delta\in E\}$. First, we show that $T$ is $\lambda$-free.

Let $S\subset E$ be arbitrary with $|S|<\lambda$. By induction on the least $\sigma<\lambda$ such that $S\subseteq \sigma$, we show the existence of $\psi\colon S\to \theta$ such that $\{\{\zeta_\alpha\restriction \gamma\mid \psi(\alpha)<\gamma<\theta\}\mid \alpha\in S\}$ is a~system of pairwise disjoint sets. If $\sigma = 0$, we are trivially done. There are two possible scenarios otherwise.

If $\sigma = \delta + 1$ for $\delta\in S$, we put $\psi(\delta) = 0$. From the inductive assumption, we get $\psi^\prime\colon S\setminus\{\delta\}\to\theta$. For $\alpha\in S\setminus\{\delta\}$, we then simply define \[\psi(\alpha) = \max\{\psi^\prime(\alpha), \min\{\mu<\theta \mid \zeta_\delta\restriction \mu\neq \zeta_\alpha\restriction\mu\}\}.\]

If $\sigma$ is a limit ordinal, we use that $E$ is non-reflective to fix in $\sigma$ a~closed and unbounded set $C$ such that $C\cap E = \varnothing$ and $0\in C$. For each $\beta\in C$, we denote by $\beta^*$ the successor of $\beta$ in $C$ and put $S_\beta = \{\alpha\in S\mid \beta<\alpha<\beta^*\}$. We see that $S$ is the (disjoint) union of all the sets $S_\beta$. Using the inductive hypothesis, we obtain $\psi_\beta\colon S_\beta\to \theta$ for each $\beta\in C$. We can assume, without loss of generality, that $\zeta_\alpha(\psi_\beta(\alpha))>\beta$ for each $\alpha\in S_\beta$ and every $\beta\in C$: indeed, the range of $\zeta_\alpha$ is cofinal in $\alpha$ and $\beta<\alpha$. Our desired $\psi\colon S\to \theta$ can be now defined as the union $\bigcup_{\beta\in C}\psi_\beta$. For two distinct $\beta,\beta^\prime\in C$, $\alpha\in S_\beta, \alpha^\prime\in S_{\beta^\prime}$ and $\gamma>\psi(\alpha), \gamma^\prime>\psi(\alpha^\prime)$, it cannot happen that $\zeta_\alpha\restriction \gamma = \zeta_{\alpha^\prime}\restriction \gamma^\prime$ since $\beta<\zeta_\alpha(\psi(\alpha))<\beta^*$ whilst $\beta^\prime<\zeta_{\alpha^\prime}(\psi(\alpha^\prime))<{\beta^\prime}^*$.

\smallskip

Now, we show that $T$ is nonfree.  Let $C$ denote the set consisting of all $\gamma<\lambda$ such that, for each $\alpha\in E$ and $\mu<\theta$ where (the range of) $\zeta_\alpha\restriction\mu$ is bounded below~$\gamma$, there exists $\beta<\gamma, \beta\in E$ such that $\zeta_\alpha\restriction \mu = \zeta_\beta\restriction\mu$. Since $\kappa^{<\theta} = \kappa$, the set $C$ is closed and unbounded in $\lambda$: indeed, for any $\gamma_0<\lambda$, there is at most $\kappa$ mappings from $\mu$ into $\gamma$ for any $\mu<\theta$; as a~consequence, there exists $\gamma_1\geq\gamma_0$ satisfying that, for all $\alpha\in E$ such that $\zeta_\alpha\restriction\mu$ is bounded below $\gamma_0$ for some $\mu<\theta$, there exists $\beta<\gamma_1, \beta\in E$ such that $\zeta_\alpha\restriction\mu = \zeta_\beta\restriction\mu$; by the same reasoning, we find $\gamma_2\geq\gamma_1$ and so on, and finally, we put $\gamma = \sup\{\gamma_n\mid n<\omega\}$; then $\gamma\in C$ (using the bounded below property), and $C$ is thus unbounded; finally, utilizing the bounded below property once more, it immediately follows that $C$ is closed.

Since $E$ is stationary in $\lambda$, the set $C\cap E$ is stationary as well. Let $f\colon \lambda\to E$ denote the order-preserving isomorphism and, for each $\alpha<\lambda$, put $\eta_\alpha = \zeta_{f(\alpha)}$. Define $F$ as the full preimage of $C\cap E$ in $f$. Pick any $\delta\in F$ and set $\gamma = f(\delta)$. Clearly $\gamma\in C\cap E$ and $\eta_\delta = \zeta_\gamma\colon \theta \to \gamma$. Since $\zeta_\gamma$ is strictly increasing, it follows that, for each $\mu<\theta$, $\zeta_\gamma\restriction\mu$ is bounded below $\gamma$ whence $\zeta_\gamma\restriction\mu\in\{\zeta_\beta\restriction\mu\mid\beta<\gamma, \beta\in E\}$ by the definition of $C$. The latter amounts to $\eta_\delta\restriction\mu\in\{\eta_\alpha\restriction\mu\mid\alpha<\delta\}$. We have proved that $T$ is nonfree.
\end{proof}

The following result can be of independent interest. In a~forthcoming paper by the second-named author and Manuel Cort\'es-Izurdiaga, it will be used to show that, under a~slightly stronger hypothesis than Assumption~$(*)$, $\kappa$-pure injectivity amounts to pure injectivity over any ring $R$ and for any (uncountable) cardinal $\kappa$.

Given $\mathcal A\subseteq\ModR$, we say that $\mathcal A$ is \emph{closed under pure filtrations} if $A_\tau\in\mathcal A$ holds for each pure $\mathcal A$-filtration $(A_\alpha\mid\alpha\leq\tau)$. In particular, a~class closed under pure filtrations is closed under direct sums.

\begin{prop} \label{p:corecon} Let $\theta$ be a~regular cardinal, and let $\mathcal A\subseteq\ModR$ be closed under pure filtrations and direct summands. Suppose that a~$\theta$-continuous well-ordered directed system $(N_\alpha,g_{\alpha\beta}\colon N_\beta\to N_\alpha \mid \beta<\alpha<\theta)$ consisting of modules from $\mathcal A$ is given. Assume that there exists $\kappa\geq |R|+\sum_{\alpha<\theta}|N_\alpha|$ satisfying $(1)$ and $(2)$ from Assumption~$(*)$ for the given $\theta$. Put $\lambda = \kappa^+$.

Then there is a~stationary subset $F\subseteq \lambda$ and an~$L\in\ModR$ possessing a~filtration $\mathfrak L = (L_\alpha\mid \alpha\leq\lambda)$ satisfying
\begin{enumerate}
	\item[(a)] $L_\alpha\in\mathcal A$ and $|L_\alpha|<\lambda$ for each $\alpha<\lambda$, and
	\item[(b)] for every $\delta\in F$ and $\delta<\varepsilon<\lambda$, we have $\varinjlim_{\alpha<\theta}N_\alpha\cong L_{\delta+1}/L_\delta$, and the latter module splits in $L_{\varepsilon}/L_\delta$.
\end{enumerate}
\end{prop}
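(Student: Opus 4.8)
The plan is to build a tree module over an \emph{almost free} tree and read the required filtration off the nonfree enumeration of its branches.

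First I would normalise the data. Assuming $N_0 = 0$ and applying Lemma~\ref{l:dirlim} with the role of its $\kappa$ played by $\theta$, I obtain $N = \bigoplus_{\alpha<\theta} N_\alpha$, the limit $C = \varinjlim_{\alpha<\theta} N_\alpha$, the canonical projection $f\colon N\to C$, and a~\emph{pure} filtration $\mathfrak K = (K_\alpha\mid\alpha\leq\theta)$ of $K = \Ker(f)$ with $N/K_\alpha \cong N_{\alpha^\prime}\oplus\bigoplus_{\alpha\leq\beta<\theta}N_\beta$ for each $\alpha<\theta$; since $\mathcal A$ is closed under direct sums, $N/K_\alpha\in\mathcal A$. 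As $\kappa\geq|R|+\sum_{\alpha<\theta}|N_\alpha|$ we have $|N|\leq\kappa$, and by hypothesis $\kappa^{<\theta}=\kappa$ with a~non-reflecting stationary $E\subseteq\lambda=\kappa^+$ of ordinals of cofinality $\theta$. Proposition~\ref{p:trees} then supplies an almost free $T\subseteq{}^\theta\lambda$ of size $\lambda$, i.e.\ one that is $\lambda$-free and nonfree. I build $L = N^{(T)}/G$ as in Definition~\ref{d:treemod} (height $\theta$, filtration $\mathfrak K$), write $\pi\colon N^{(T)}\to L$ and $L_S = (N^{(S)}+G)/G$. Fixing the nonfree enumeration $T=\{\eta_\alpha\mid\alpha<\lambda\}$ with its stationary witness $F\subseteq\lambda$, and setting $S_\alpha=\{\eta_\beta\mid\beta<\alpha\}$, $L_\alpha=L_{S_\alpha}$, the sequence $\mathfrak L=(L_\alpha\mid\alpha\leq\lambda)$ is a~continuous filtration of $L$.

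For (a) the cardinality bound is immediate: $|L_\alpha|\leq|N^{(S_\alpha)}|\leq|\alpha|\cdot|N|\leq\kappa<\lambda$ for $\alpha<\lambda$. To see $L_\alpha\in\mathcal A$, I exhibit a~\emph{pure} $\mathcal A$-filtration of $L_S$ for every $S=S_\alpha$ (indeed any $S\subseteq T$ with $|S|<\lambda$). By $\lambda$-freeness there is $\psi\colon S\to\theta$ with pairwise disjoint end-segments $\{\eta\restriction\gamma\mid\psi(\eta)<\gamma<\theta\}$; I re-enumerate $S=\{\zeta_\xi\mid\xi<|S|\}$ so that $\xi\mapsto\psi(\zeta_\xi)$ is non-decreasing and filter $L_S$ by the partial unions. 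As in Lemma~\ref{l:tree} (via Lemma~\ref{l:aux}) the $\xi$-th factor is $\cong N/K_{\gamma_\xi}$ with $\gamma_\xi=\sup\{\mu<\theta\mid(\exists\rho<\xi)\,\zeta_\xi\restriction\mu=\zeta_\rho\restriction\mu\}$; disjointness of end-segments together with $\psi(\zeta_\rho)\leq\psi(\zeta_\xi)$ forces $\gamma_\xi\leq\psi(\zeta_\xi)<\theta$, so each factor lies in $\mathcal A$. Closure of $\mathcal A$ under pure filtrations then gives $L_S\in\mathcal A$, \emph{provided the filtration is pure}.

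Establishing that purity is the main obstacle. The successor step embeds $V=(L_S)_\xi$ into $W=V+U$ where $U=\pi(N^{\{\zeta_\xi\}})$; since any nonzero single-branch element would contradict the minimality in Lemma~\ref{l:aux}, one has $G\cap N^{\{\zeta_\xi\}}=0$, whence $U\cong N$ and $U\cap V\cong K_{\gamma_\xi}$. Because $\mathfrak K$ is pure and $K=\Ker(f)$ is pure in $N$, the submodule $K_{\gamma_\xi}$ is pure in $N$, i.e.\ $U\cap V$ is pure in $U$. Comparing $0\to U\cap V\to U\to W/V\to 0$ with $0\to V\to W\to W/V\to 0$ and using naturality of the $\Tor_1(W/V,-)$ connecting maps (the upper one vanishes, hence so does the lower one) transfers this to purity of $V$ in $W$; continuity at limits makes the whole filtration pure. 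This is exactly where the hypothesis that $\mathfrak K$ be a~\emph{pure} filtration, coming from the $\theta$-continuity in Lemma~\ref{l:dirlim}, is indispensable.

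For (b) I use the nonfreeness. For $\delta\in F$ one has $\eta_\delta\restriction\mu\in\{\eta_\beta\restriction\mu\mid\beta<\delta\}$ for all $\mu<\theta$, so $(N^{(S_\delta)}+G)\cap N^{\{\eta_\delta\}}=K^{\{\eta_\delta\}}$ and hence $L_{\delta+1}/L_\delta\cong N/K=C=\varinjlim_{\alpha<\theta}N_\alpha$. For the splitting in $L_\varepsilon/L_\delta$ (any $\delta<\varepsilon<\lambda$) I consider $\tilde r\colon N^{(S_\varepsilon)}\to C$ sending a~tuple to $f$ of its $\eta_\delta$-coordinate. Every generator $\nu_\eta(x)-\nu_\zeta(x)$ of $G$ has $x\in K=\Ker(f)$, so $\tilde r$ kills $G$, and it kills $N^{(S_\delta)}$ as $\eta_\delta\notin S_\delta$; thus $\tilde r$ factors through a~retraction $r\colon L_\varepsilon/L_\delta\to C$ whose restriction to $L_{\delta+1}/L_\delta=\pi(N^{\{\eta_\delta\}})$ is the isomorphism onto $C$. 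Hence $L_{\delta+1}/L_\delta$ is a~direct summand of $L_\varepsilon/L_\delta$, completing the proof. The retraction argument is uniform in $\varepsilon$, and only the identification of the factor with $C$ uses $\delta\in F$.
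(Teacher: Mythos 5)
Your proof is correct, and it follows the paper's overall architecture: the tree module $N^{(T)}/G$ built over the almost free tree supplied by Proposition~\ref{p:trees}, with $\lambda$-freeness yielding (a) and nonfreeness yielding (b). The two technically delicate steps are, however, executed differently, and both of your variants work. For (a), the paper fixes complements $P_{\psi(\eta)}$ of the $K_{\psi(\eta)}$ (available because the sequences $\mathcal E_\alpha$ of Lemma~\ref{l:dirlim} split at successor stages), starts the filtration of $L_S$ at $Y_1=(V+G)/G$ with $V=\bigoplus_{\eta\in S}P_{\psi(\eta)}$, adjoins branches of minimal $\psi$-value among those not yet absorbed, and obtains factors $K_\delta/K_\gamma$; this forces it to check separately that $\mathfrak K$ is an $\mathcal A$-filtration and to handle the set $U$ of not-yet-absorbed branches. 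You instead enumerate $S$ in non-decreasing $\psi$-order and filter by the partial unions $L_{\{\zeta_\rho\mid\rho<\xi\}}$; the disjoint-end-segment property then bounds $\gamma_\xi\leq\psi(\zeta_\xi)<\theta$ directly, the factors are $N/K_{\gamma_\xi}\in\mathcal A$ (so you only need $\mathfrak K$ to be pure, not an $\mathcal A$-filtration), and purity passes from $K_{\gamma_\xi}\subseteq N$ to $Y_\xi\subseteq Y_{\xi+1}$ by naturality of the $\Tor_1(W/V,-)$ connecting map --- a correct and arguably cleaner route than the paper's diagram chase. For the splitting in (b), the paper exhibits an internal decomposition $L_{S_\varepsilon}/L_{S_\delta}=L_{S_{\delta+1}}/L_{S_\delta}\oplus L_{S_\varepsilon\setminus\{\eta_\delta\}}/L_{S_\delta}$, whereas you build an explicit retraction induced by $f$ applied to the $\eta_\delta$-coordinate; both hinge on the fact that every coordinate of every element of $G$ lies in $K$. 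Two cosmetic slips: $\tilde r$ kills $G\cap N^{(S_\varepsilon)}$ rather than $G$ itself (which is all that is needed), and the order type of your re-enumeration of $S$ need not literally equal $|S|$; neither affects the argument.
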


\begin{proof} We can assume, w.l.o.g., that $N_0 = 0$. As in the proof of Theorem~\ref{t:push-down}, put $N = \bigoplus_{\alpha<\theta} N_\alpha$, and consider the ($\theta$-pure) presentation of $C = \varinjlim _{\alpha<\theta} N_\alpha$
\[\mathcal E\colon 0\longrightarrow K\overset{\subseteq}{\longrightarrow} N \overset{f}{\longrightarrow} C \longrightarrow 0.\]

Lemma~\ref{l:dirlim} provides us with a~pure filtration $\mathfrak K = (K_\alpha \mid \alpha\leq\theta)$ of $K$. Furthermore, using the notation from the lemma, $N/K_\alpha\cong \bigoplus_{\alpha\leq\beta<\theta} N_\beta \oplus \bigoplus_{\beta<\alpha} N_\beta/K_\alpha \in\mathcal A$ for each $\alpha<\theta$ since $\bigoplus_{\beta<\alpha} N_\beta/K_\alpha \cong \Img(f_\alpha) = N_{\alpha^\prime}\in\mathcal A$.

Moreover, $\mathfrak K$ is an~$\mathcal A$-filtration: indeed, $K_\alpha$ is a~direct summand in $\bigoplus_{\beta<\alpha}N_\beta$ which is a~canonical direct summand in $N\in\mathcal A$ if $\alpha$ is a~successor ordinal or $\alpha = 0$, and so $K_\alpha$ splits in $K_{\alpha+1}\in\mathcal A$ in this case; if $\alpha>0$ is limit, then we have $K_{\alpha+1}/K_\alpha\cong N_\alpha\in\mathcal A$ since the rightmost connecting map from $\mathcal E_\alpha$ to $\mathcal E_{\alpha+1}$ is $\ident_{N_\alpha}$.

\smallskip

By the property of $\kappa$, we have $\kappa\geq |N|$ and $\kappa^{<\theta} = \kappa$. Moreover, there is a~non-reflecting stationary set $E\subseteq\lambda$ consisting of ordinals of cofinality $\theta$. Let $T\subseteq {}^\theta\lambda$ be an~almost free tree provided by Proposition~\ref{p:trees}.

As in Definition~\ref{d:treemod}, consider the module $N^\prime = N^{(T)}$. For each $\eta\in T$, denote by $\nu_\eta\colon N\to N^\prime$ the canonical embedding from $N$ onto the $\eta$th direct summand of~$N^\prime$. Let $G$ be the submodule of $N^\prime$ generated by the set \[J = \{\nu_\eta(x)-\nu_\zeta(x)\mid \eta,\zeta\in T, (\exists \alpha<\theta)(x\in K_\alpha\;\&\; \eta\restriction\alpha = \zeta\restriction\alpha)\}.\]
Furthermore, denote by $\pi\colon N^\prime\to N^\prime/G$ the canonical projection and, for each $S\subseteq T$, put $L_S = N^{(S)}+G/G$ and $L = L_T$.

Pick any $S\subseteq T$ with $|S|<\lambda$ and fix a~map $\psi\colon S\to\theta$ from the definition of $\lambda$-freeness. We claim that $L_S\in\mathcal A$.

First, we fix, for each $\alpha$ in the range of $\psi$, a~submodule $P_\alpha$ of $N$ such that $K_\alpha\oplus P_\alpha = N$; remember that $\alpha$ is zero or a~successor ordinal by our technical requirement on $\psi$. Then put $V = \bigoplus_{\eta\in S} P_{\psi(\eta)} \subseteq N^{(S)}$, $Y_1 = V+G/G \subseteq L_S$ and $Y_0 = 0$. We claim that $V\cap G = 0$ from which it would readily follow that $Y_1\cong V\in \mathcal A$.

Assume, for the sake of contradiction, that there exists a~nonzero $x\in V\cap G$ and express it, using nonzero generators from $J$, as \[x = \sum_{i=1}^n (\nu_{\eta_i}(x_i)-\nu_{\zeta_i}(x_i)). \eqno{(+)}\] As in the statement of Lemma~\ref{l:aux}, we can assume that $(+)$ uses the least number of pairwise distinct elements from $T$ possible. Choose $\eta\in T$ with the maximal $\psi(\eta)$ among all the elements from $T$ appearing in $(+)$. Let $I$ denote the set of all the $i\in\{1,\dots,n\}$ such that the $i$th summand in $(+)$ contains $\eta$ as a~lower index; we can assume, w.l.o.g., that $\eta = \eta_i$ in such a~case. We claim that $x_i\in K_{\psi(\eta)}$ for every $i\in I$. Suppose not.

Using the notion of a~rank from Definition~\ref{d:treemod}, we see that $\rho(x_i)>\psi(\eta)\geq\psi(\zeta_i)$ and, since the $i$th summand in $(+)$ is an element of $J$, we get $\eta\restriction\rho(x_i) = \zeta_i\restriction\rho(x_i)$ which is in contradiction with the property of $\psi$. But now, since $x\in V$ and $x_i\in K_{\psi(\eta)}$ for $i\in I$, we have $\sum_{i\in I} x_i = 0$, i.e.\ the $\eta$th coordinate of $x$ in $N^\prime$ is zero; a~contradiction with Lemma~\ref{l:aux}.

\smallskip

We are going to build a~pure $\mathcal A$-filtration $\mathfrak Y = (Y_\beta\mid \beta\leq \sigma)$ of $L_S$, thus proving that $L_S\in\mathcal A$. For each $\beta>0$, the $Y_\beta$ will have the form $(N^{(Q_\beta)}+V+G)/G$ where $Q_\beta\subseteq S$, and $Q_\beta \subsetneq Q_\gamma$ if $\beta<\gamma\leq\sigma$. Our $Y_1$ has got the correct form, and we have no other choice in the limit steps than to take the union of the preceding ones; otherwise put, if $Y_\beta$ is defined for all $\beta<\gamma$ and $\gamma$ is limit, we set $Q_\gamma = \bigcup_{\beta<\gamma} Q_\beta$. If $\beta<\lambda$ is such that $Y_\beta = L_S$, we put $\sigma = \beta$ and we are done. (We must end up with $\sigma<\lambda$ since $|S|<\lambda$ and the subsets $Q_\beta$ of $S$ form a~strictly increasing chain.)

Otherwise, $Y_\beta\subsetneq L_S$ and the set $U = \{\eta\in S\mid L_{\{\eta\}}\nsubseteq Y_\beta\}$ is nonempty. Set $Q = S\setminus U$ and notice that $Q_\beta\subseteq Q$ and $Y_\beta = (N^{(Q)}+V+G)/G$. Choose an $\eta\in U$ with the minimal $\psi(\eta)$ possible among all the elements in $U$. Put $\delta = \psi(\eta)$ and $\gamma = \sup\{\alpha<\theta\mid (\exists \zeta\in Q)\,\eta\restriction\alpha = \zeta\restriction\alpha\}$. Then $\pi\nu_\eta(K_\gamma)\subseteq Y_\beta$ which implies $\gamma<\delta$ since $\pi\nu_\eta(P_{\delta})\subseteq Y_\beta$ and $\eta\in U$. We claim that $\nu_\eta(K_{\delta})\cap (N^{(Q)}+V+G) = \nu_\eta(K_\gamma)$. We have already settled the inclusion `$\supseteq$'. Let us prove the other one.

Pick an arbitrary $x+y+z\in \nu_\eta(K_{\delta})$ where $y\in N^{(Q)}$, $z\in V$ and \[x = \sum_{i=1}^n (\nu_{\eta_i}(x_i)-\nu_{\zeta_i}(x_i)), \eqno{(+)}\] where the summands in $(+)$ are nonzero and belong to $J$. As before, we assume that the expression of $x$ in $(+)$ uses the least amount of pairwise distinct elements from $T$ possible. Lemma~\ref{l:aux} implies that all the elements from $T$ occuring in $(+)$ belong to $S$. Notice also that no $\zeta\in U\setminus\{\eta\}$ occurs in $(+)$: indeed, if a~$\zeta\in U\setminus\{\eta\}$ occurs in $(+)$, we choose one with the maximal $\psi(\zeta)$ among all of them. According to Lemma~\ref{l:aux}, the $\zeta$th coordinate of $x$ in $N^\prime$ is nonzero. On the other hand, the $\zeta$th coordinate of $x+y+z$ is zero since $x+y+z\in \nu_\eta(K_{\delta})$. Moreover, $y$ does not contribute to this coordinate, and so the $\zeta$th coordinate of $x$ belongs to $\nu_\zeta(P_{\psi(\zeta)})$. In particular, there has to be an~$i\in\{1,\dots, n\}$ such that $\zeta$ occurs in the $i$th summand of $(+)$, w.l.o.g.\ $\zeta = \zeta_i$, and $\rho(x_i)>\psi(\zeta)$. But then $\eta_i\restriction \rho(x_i) = \zeta\restriction \rho(x_i)$ which is absurd: if $\eta_i\in Q$, it would entail that $\Img(\nu_\zeta)\subseteq N^{(Q)}+V+G$ and $\zeta\notin U$; and if $\eta_i\in U$, then $\psi(\zeta)\geq\psi(\eta_i)$ holds by the choices of $\zeta$ and $\eta$ (if $\eta_i = \eta$), and we get a~contradiction with the property of $\psi$.

We have demonstrated that only the elements from $Q\cup\{\eta\}$ occur as lower indices in~$(+)$. By the definition of $\gamma$, the $\eta$th coordinate of $x$ belongs to $K_\gamma$, and the $\eta$th coordinate of $z$ (and, trivially, $y$) is zero since $x+y+z\in \nu_\eta(K_{\delta})$. We have proved that $\nu_\eta(K_{\delta})\cap (N^{(Q)}+V+G) = \nu_\eta(K_\gamma)$.

Finally, we can set $Q_{\beta+1} = Q\cup\{\eta\}$ and $Y_{\beta+1} = (N^{(Q_{\beta+1})}+V+G)/G$. Then \[Y_{\beta+1}/Y_\beta \cong \frac{N^{(Q_{\beta+1})}+V+G}{N^{(Q)}+V+G} = \frac {\nu_\eta(K_{\delta})+(N^{(Q)}+V+G)}{N^{(Q)}+V+G} \cong \frac{\nu_\eta(K_{\delta})}{\nu_\eta(K_\gamma)}\cong \frac{K_\delta}{K_\gamma}\in \mathcal A.\] Also notice that $Y_\beta$ is pure in $Y_{\beta+1}$ as witnessed by the commutative diagram below\footnote{The vertical arrows in the diagram are embeddings since $\Img(\nu_\zeta) \cap G = 0$ holds for each $\zeta\in T$.}
\[\xymatrix{0 \ar[r] & Y_\beta \ar[r]^-{\subseteq} & Y_{\beta+1} \ar[r] & K_\delta/K_\gamma \ar[r] & 0 \\
0 \ar[r] & K_\gamma \ar[r]_-{\subseteq} \ar[u]^-{\pi\nu_\eta\restriction K_\gamma} & K_\delta \ar[u]^-{\pi\nu_\eta\restriction K_\delta} \ar[r] & K_\delta/K_\gamma \ar@{=}[u] \ar[r] & 0
}\] and the fact that $\mathfrak K$ is pure. This completes the construction of the~pure $\mathcal A$-filtration $\mathfrak Y$ of $L_S$. We have shown that $L_S$, indeed, belongs to $\mathcal A$.

\smallskip

Using that $T$ is nonfree, we can fix an~enumeration $T = \{\eta_\alpha\mid \alpha<\lambda\}$ and a~stationary $F\subseteq \lambda$ as in Definition~\ref{d:trees}. Notice that $|L_S|<\lambda$ for $|S|<\lambda$, by our choice of $\kappa$. Let $\mathfrak L$ denote the filtration $(L_{S_\alpha} \mid \alpha\leq\lambda)$ where $S_\alpha = \{\eta_\beta \mid \beta<\alpha\}$ for each $\alpha\leq\lambda$. We know that (a) holds for $\mathfrak L$.

To verify (b), choose a~$\delta\in F$ and an~$\varepsilon<\lambda$ with $\delta<\varepsilon$. We have to show that $C\cong L_{S_{\delta +1}}/L_{S_\delta}$ and that the latter module is a~direct summand in $L_{S_\varepsilon}/L_{S_\delta}$.

From the property of the stationary set $F$, we get that $\eta_\delta\restriction\mu\in\{\eta_\alpha\restriction\mu \mid \alpha<\delta\}$ holds for each ordinal $\mu<\theta$. It yields $\pi\nu_{\eta_\delta}(K) = L_{S_\delta} \cap L_{\{\eta_\delta\}}$; the inclusion `$\supseteq$' follows from the fact that the generators in $J$ use only elements $x\in K$. In particular, $L_{S_{\delta +1}}/L_{S_\delta} \cong L_{\{\eta_\delta\}}/\pi\nu_{\eta_\delta}(K) \cong N/K \cong C$. Arguing once more by the form of elements in $J$, we observe that $L_{\{\eta_\delta\}}\cap L_{S_\varepsilon\setminus\{\eta_\delta\}} \subseteq \pi\nu_{\eta_\delta}(K) \subseteq L_{S_\delta}$, whence \[\frac{L_{S_{\delta +1}}}{L_{S_\delta}} \oplus \frac{L_{S_\varepsilon\setminus\{\eta_\delta\}}}{L_{S_\delta}} = L_{S_\varepsilon}/L_{S_\delta}.\]
\end{proof}

The ``tree module'' construction in the proof of Proposition~\ref{p:corecon} was suggested already in \cite[Hypothesis~3.3a, 1.4(bis) and 1.5(bis)]{ES}. Our nonfree part, i.e.\ the last three paragraphs of the proof, corresponds to 1.5(bis) and works pretty much as suggested. However, the proof of the $\lambda$-free part could not be carried out along the lines of 1.4(bis) mostly because of a~flaw (albeit eventually fixable) in the proof of \cite[Lemma~1.4]{ES}: the basis $Y_\tau$ therein is constructed without any regard to the bases $B_\alpha$ of $N_\alpha$, $\alpha<\tau$, which, in general, leads to a~clash between $Y_\tau$ and some $B_\alpha$, making the last sentence of the proof false.

\smallskip

The main result of this section follows. By Corollary~\ref{c:Silvana}, it applies, in particular, to the left-hand classes $\mathcal A$ of cotorsion pairs generated by a~set of modules in case $\A$ is covering. In conjunction with Corollary~\ref{c:Silvana}, it gives a~consistently positive answer to \cite[Open problem~5.4.1]{GT0}.

\begin{thm} \label{t:maincon} Assume $(*)$. Let $\mathcal A\subseteq \ModR$ and $\xi$ be an uncountable regular cardinal such that $\mathcal A = \Filt(\mathcal A^{<\xi})$ and $\mathcal A$ is closed under direct summands and taking direct limits of $\xi$-continuous well-ordered directed systems of modules (and monomorphisms between them). Then $\mathcal A$ is closed under direct limits.
\end{thm}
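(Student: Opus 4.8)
The plan is to reduce closure under arbitrary direct limits to closure under $\xi$-continuous well-ordered systems (which is assumed) by an inductive argument on the cofinality/cardinality of the index set, using Proposition~\ref{p:corecon} as the engine that ``reflects'' potential counterexamples. The crucial structural fact I would exploit is that $\mathcal A = \Filt(\mathcal A^{<\xi})$ forces $\mathcal A$ to be closed under \emph{pure} filtrations: any pure $\mathcal A$-filtration can be refined to an $\mathcal A^{<\xi}$-filtration because pure submodules interact well with the $<\!\xi$-Kaplansky property (Theorem~\ref{t:pure-epi} shows $\mathcal A$ is $<\!\xi$-Kaplansky), so $A_\tau \in \Filt(\mathcal A^{<\xi}) = \mathcal A$. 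This verifies the hypothesis of Proposition~\ref{p:corecon} and is what lets us invoke that proposition at all.

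First I would argue that it suffices to show $\mathcal A$ is closed under direct limits of \emph{$\theta$-continuous well-ordered} systems indexed by an arbitrary regular cardinal $\theta$; the general directed case then follows by the same cofinality-reduction used at the start of the proof of Theorem~\ref{t:push-down} (passing to the $\theta^+$-continuous system $\mathcal C$ of direct limits of small subsystems). For $\theta \geq \xi$ this is immediate by hypothesis, so the content is in the regular cardinals $\theta < \xi$. I would fix such a $\theta$ and a $\theta$-continuous well-ordered system $(N_\alpha, g_{\alpha\beta} \mid \beta < \alpha < \theta)$ of modules from $\mathcal A$ with direct limit $C = \varinjlim N_\alpha$, and aim to show $C \in \mathcal A$.

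The key move is to apply Proposition~\ref{p:corecon}: using Assumption~$(*)$ for the regular cardinal $\theta$, pick $\kappa \geq |R| + \sum_{\alpha<\theta}|N_\alpha|$ satisfying $(1)$ and $(2)$, set $\lambda = \kappa^+$, and obtain the module $L$ with its filtration $\mathfrak L = (L_\alpha \mid \alpha \leq \lambda)$ and stationary set $F \subseteq \lambda$. By (a) each $L_\alpha \in \mathcal A$ with $|L_\alpha| < \lambda$; I would then argue that $\mathfrak L$ is an $\mathcal A$-filtration of $L = L_\lambda$, so $L \in \Filt(\mathcal A) = \mathcal A$ after refining to $\mathcal A^{<\xi}$. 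The decisive point is property (b): on the stationary set $F$, the consecutive factor $L_{\delta+1}/L_\delta \cong C$ splits off inside every $L_\varepsilon/L_\delta$ for $\varepsilon > \delta$. This is precisely the configuration that, combined with $L \in \mathcal A$ and closure under direct summands, forces $C \in \mathcal A$: a stationary-set splitting argument (of Eklof--Mekler type) shows that if $C$ were \emph{not} in $\mathcal A$, the nonfreeness encoded by $F$ would obstruct $L$ from being $\mathcal A$-filtered, contradicting the construction. Concretely, I would extract $C$ as a direct summand of some $L_\varepsilon/L_\delta$ and use that this quotient lies in $\mathcal A$ (being an $\mathcal A$-filtered module built from factors $L_{\beta+1}/L_\beta$, each isomorphic either to a small member of $\mathcal A$ or to $C$) together with summand-closure.

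The main obstacle I anticipate is the last step: turning the splitting in~(b) into the conclusion $C \in \mathcal A$ cleanly. The factors $L_{\delta+1}/L_\delta$ for $\delta \in F$ are themselves copies of $C$, so one cannot naively claim $L_\varepsilon/L_\delta \in \mathcal A$ by filtering with small pieces — the filtration of $L_\varepsilon/L_\delta$ reuses $C$ as a factor. The resolution is to choose $\varepsilon = \delta+1$, or more carefully an $\varepsilon$ just above $\delta$ where the intervening factors are all the \emph{small} modules $K_\delta/K_\gamma$ and the single copy of $C$; then $L_\varepsilon/L_\delta \cong C \oplus (\text{something in }\mathcal A)$ by~(b), and since the whole object $L_\varepsilon/L_\delta$ is a subquotient of the $\mathcal A$-filtered $L$ that itself lies in $\mathcal A$, summand-closure delivers $C \in \mathcal A$. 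Making this bookkeeping rigorous — ensuring the ambient module whose summand is $C$ genuinely lies in $\mathcal A$ without circularity — is where I would spend the most care, leaning on the purity of $\mathfrak L$ and the refinement to $\mathcal A^{<\xi}$ to guarantee membership at each stage.
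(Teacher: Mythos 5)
Your overall skeleton matches the paper's: reduce to $\theta$-continuous well-ordered systems, feed them into Proposition~\ref{p:corecon}, and use property~(b) plus closure under direct summands. But two steps in the middle are genuinely broken, and the second one is exactly the point you flag as ``bookkeeping'' --- it is not bookkeeping, it is the missing idea. First, your claim that $\mathfrak L$ is an $\mathcal A$-filtration of $L$ is circular: part~(a) of Proposition~\ref{p:corecon} only gives $L_\alpha\in\mathcal A$, not $L_{\alpha+1}/L_\alpha\in\mathcal A$, and by~(b) the consecutive factors at $\delta\in F$ are copies of $C=\varinjlim N_\alpha$, the very module whose membership in $\mathcal A$ you are trying to prove. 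The correct way to get $L\in\mathcal A$ --- and the place where the theorem's hypothesis is actually used --- is that $(L_\alpha\mid\alpha<\lambda)$ is a $\lambda$-continuous (hence $\xi$-continuous, since $\lambda=\kappa^+>\xi$) well-ordered system of modules from $\mathcal A$ with inclusions as connecting maps, so its direct union $L$ lies in $\mathcal A$ by the assumed closure property. Your proposal never invokes this hypothesis at this step.

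Second, the endgame. You correctly diagnose that one cannot filter $L_\varepsilon/L_\delta$ by the factors of $\mathfrak L$ (they reuse $C$), but your proposed fix fails: taking $\varepsilon=\delta+1$ just returns $C$ itself, and ``being a subquotient of the $\mathcal A$-filtered module $L$'' does not place a module in $\mathcal A$. The argument that actually closes the circle is a club-versus-stationary intersection: since $L\in\mathcal A=\Filt(\mathcal A^{<\xi})$ and $\lambda>\xi+|R|$, one can re-index an $\mathcal A^{<\xi}$-filtration of $L$ to an $\mathcal A$-filtration $(M_\alpha\mid\alpha\leq\lambda)$ with $|M_\alpha|<\lambda$; the set $X=\{\alpha<\lambda\mid L_\alpha=M_\alpha\}$ is closed and unbounded; choosing $\delta\in X\cap F$ (possible as $F$ is stationary) and $\varepsilon\in X$ with $\varepsilon>\delta$ yields $L_\varepsilon/L_\delta=M_\varepsilon/M_\delta$, which \emph{is} in $\mathcal A$ because it is filtered by the segment of the $M$-filtration between $\delta$ and $\varepsilon$, whose factors lie in $\mathcal A^{<\xi}$. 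Only then does~(b) plus summand-closure give $C\in\mathcal A$. Without producing the club $X$ and intersecting it with $F$, the proof does not close. (A minor remark: your justification of ``closed under pure filtrations'' via the Kaplansky property is unnecessary --- $\Filt(\Filt(\mathcal S))=\Filt(\mathcal S)$ already gives closure under arbitrary $\mathcal A$-filtrations --- but the conclusion there is correct.)
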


\begin{proof} It is enough to show that, for each infinite regular cardinal $\theta$, the class $\mathcal A$ is closed under taking direct limits of $\theta$-continuous well-ordered directed systems of length $\theta$. Let $(N_\alpha,g_{\alpha\beta}\colon N_\beta\to N_\alpha \mid \beta<\alpha<\theta)$ be any such system; so $N_\alpha\in\mathcal A$ for each $\alpha<\theta$.

Using Assumption~$(*)$, we obtain a~suitable $\kappa\geq |R|+\xi+\sum_{\alpha<\theta}|N_\alpha|$ for our~$\theta$. Set $\lambda = \kappa^+$. Let $F\subseteq \lambda$ be a~stationary set and $\mathfrak L = (L_\alpha\mid \alpha\leq\lambda)$ be a~filtration provided by Proposition~\ref{p:corecon}. In particular, $L = L_\lambda$ is the directed union of $\lambda$-continuous well-ordered directed system consisting of the submodules $L_\alpha\in\mathcal A$, $\alpha<\lambda$. Thus we get $L\in\mathcal A$ by our assumption.

Since $\lambda>\xi+|R|$ and $\mathcal A \subseteq \Filt(\mathcal A^{<\xi})$, there is an $\mathcal A$-filtration $(M_\alpha\mid \alpha\leq\lambda)$ of~$L$ with $|M_\alpha|<\lambda$ for each $\alpha<\lambda$. It follows that the set $X = \{\alpha<\lambda \mid L_\alpha = M_\alpha\}$ is closed and unbounded. Choose a~$\delta\in X\cap F$ and an $\varepsilon\in X$ with $\varepsilon > \delta$. Then, by (b) from Proposition~\ref{p:corecon}, the direct limit $\varinjlim_{\alpha<\theta} N_\alpha$ is isomorphic to a~direct summand of $L_\varepsilon/L_\delta = M_\varepsilon/M_\delta\in\mathcal A$. It follows that $\varinjlim_{\alpha<\theta} N_\alpha\in\mathcal A$.
\end{proof}

\begin{rema} \label{r:unaware} The assumption $\mathcal A = \Filt(\mathcal A^{<\xi})$ in the statement of Theorem~\ref{t:maincon} can be relaxed to `$\mathcal A \subseteq \Filt(\mathcal A^{<\xi})$ and $\mathcal A$ is closed under pure filtrations'. This slight generalisation covers also the classes $\mathcal A$ which are not necessarily closed under extensions, e.g.\ definable classes of modules (which, of course, are trivially closed under direct limits). Apart from some classes of the form $\Add(M)$, treated in detail in \cite{Sa2}, the authors of this paper are unaware of any example of a~covering class~$\mathcal A$ of modules not satifying this relaxed assumption for a~suitable cardinal $\xi$.
\end{rema}

\begin{cor} \label{c:conEnochs} Assume $(*)$. Let $\mathcal S \subseteq \ModR$ be a~set. Put $\mathcal A = \Filt(\mathcal S)$. Then the following conditions are equivalent:
\begin{enumerate}
	\item $\mathcal A$ is a~covering class of modules;
	\item $\mathcal A$ is closed under direct limits;
	\item $\mathcal A$ is closed under direct summands and under taking direct limits of $\xi$-continuous well-ordered directed systems of modules (and monomorphisms between them) for an~infinite regular cardinal $\xi$.
\end{enumerate}
\end{cor}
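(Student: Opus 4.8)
The plan is to establish the cycle of implications $(1)\Rightarrow(3)\Rightarrow(2)\Rightarrow(1)$, pulling together the machinery developed in the earlier sections. The implication $(2)\Rightarrow(1)$ is essentially free: since $\mathcal S$ is a set, $\mathcal A=\Filt(\mathcal S)=\Filt(\mathcal A^{<\xi})$ for a suitable regular cardinal $\xi$ (as noted in the Preliminaries via \cite[Theorem~7.13]{GT0}), so $\mathcal A$ is precovering by \cite[Theorem~6.11]{GT0}; combined with closure under direct limits, the classic result of Enochs (\cite[Theorem~5.31]{GT0}) upgrades the precovers to covers, giving that $\mathcal A$ is covering.

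For $(1)\Rightarrow(3)$, I would argue that a covering class is automatically closed under direct summands and direct sums (recorded in the Preliminaries), and that closure under direct limits of $\xi$-continuous well-ordered systems follows from the work of Section~2. Concretely, a covering $\mathcal A=\Filt(\mathcal A^{<\xi})$ is $<\!\xi$-Kaplansky by \cite[Theorem~10.3(a)]{GT0}, so Corollary~\ref{c:Silvana} applies and yields that $\mathcal A$ is closed under $\xi$-pure-epimorphic images, hence under $\xi$-directed limits. Since a $\xi$-continuous directed system is in particular $\xi$-directed, its direct limit lands in $\mathcal A$, which is exactly the closure property demanded in~(3).

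The heart of the matter is $(3)\Rightarrow(2)$, and this is where I would invoke the consistency hypothesis together with the tree-module construction. Here I would simply apply Theorem~\ref{t:maincon}: the standing assumption $(*)$ is in force, $\mathcal A=\Filt(\mathcal A^{<\xi})$ holds because $\mathcal S$ is a set, and condition~(3) supplies precisely the remaining hypotheses of that theorem (closure under direct summands and under direct limits of $\xi$-continuous well-ordered systems of modules and monomorphisms). The conclusion of Theorem~\ref{t:maincon} is exactly that $\mathcal A$ is closed under arbitrary direct limits, i.e.\ statement~(2).

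The main obstacle is conceptual rather than technical for this corollary itself: all the genuine difficulty has been front-loaded into Theorem~\ref{t:maincon} and its supporting Proposition~\ref{p:corecon}, where the almost-free tree of size $\kappa^+$ furnished by Proposition~\ref{p:trees} is used to manufacture a module $L$ whose filtration realizes the recalcitrant direct limit $\varinjlim_{\alpha<\theta}N_\alpha$ as a direct summand of a consecutive filtration quotient on a stationary set. Given those results, the only point requiring care in assembling the corollary is to confirm that the regular cardinal $\xi$ witnessing $\mathcal A=\Filt(\mathcal A^{<\xi})$ in $(1)\Rightarrow(3)$ can be taken to be uncountable, so that Theorem~\ref{t:maincon}'s hypotheses are met verbatim; this is harmless since one may always enlarge $\xi$.
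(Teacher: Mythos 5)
Your proposal is correct and follows essentially the same route as the paper: $(3)\Rightarrow(2)$ via Theorem~\ref{t:maincon} (after harmlessly enlarging $\xi$ so that every module in $\mathcal S$ is $<\!\xi$-presented and $\xi$ is uncountable), $(1)\Rightarrow(3)$ via Corollary~\ref{c:Silvana} together with the fact that covering classes are closed under direct summands, and $(2)\Rightarrow(1)$ from precovering (the paper cites \cite[Theorem~7.21]{GT0} directly for $\Filt$ of a set, rather than routing through \cite[Theorem~7.13]{GT0}) plus Enochs' theorem. The only cosmetic difference is that the identity $\Filt(\mathcal S)=\Filt(\mathcal A^{<\xi})$ is better justified by enlarging $\xi$ than by \cite[Theorem~7.13]{GT0}, which concerns cotorsion pairs.
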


\begin{proof} Theorem~\ref{t:maincon} covers the implication $(3) \Longrightarrow (2)$; the only one using Assumption $(*)$. Note that we can suppose w.l.o.g.\ that each module in $\mathcal S$ is $<\!\xi$-presented.

The implication $(1) \Longrightarrow (3)$ yields from Corollary~\ref{c:Silvana}. Finally, $(2) \Longrightarrow (1)$: by \cite[Theorem~7.21]{GT0}, $\mathcal A$ is a~precovering class and the rest follows by the classic Enochs' result.
\end{proof}

\begin{rema} \label{r:aware}
Theorem~\ref{t:maincon} and the implications $(3) \Longrightarrow (1)$ and $(3) \Longrightarrow (2)$ in Corollary~\ref{c:conEnochs} cannot be proved in ZFC alone (unless the existence of strongly compact cardinals is refutable in ZFC which is considered to be highly implausible).

In any model of ZFC with a~strongly compact cardinal $\xi$, the class $\mathcal A$ of free abelian groups, and---more generally---of projective modules over any ring $R$ with $|R|<\xi$, is closed under $\xi$-pure-epimorphic images by \cite[Theorem~3.3]{ST}. In particular, $\mathcal A$ satisfies the hypotheses of Theorem~\ref{t:maincon} and $(3)$ in Corollary~\ref{c:conEnochs}. However, $\mathcal A$ is closed under $\varinjlim$ (equivalently, $\mathcal A$ is covering) only if the ring $R$ in question is right perfect.
\end{rema}

Regardless of the remark above, it is still possible that Enochs Conjecture can be proved in ZFC.

\bigskip

\noindent\emph{Acknowledgement}: The authors very much appreciate the comments and suggestions made by the anonymous referee.


\bigskip

\end{document}